\newcommand{\lyxmathsym}[1]{\ifmmode\begingroup\def\b@ld{bold}
  \text{\ifx\math@version\b@ld\bfseries\fi#1}\endgroup\else#1\fi}
\numberwithin{equation}{section}
\numberwithin{figure}{section}
\theoremstyle{plain}
\newtheorem{thm}{Theorem}
  \theoremstyle{remark}
  \newtheorem{rem}[thm]{Remark}
  \theoremstyle{plain}
\newtheorem{example}[thm]{Example}
  \theoremstyle{plain}
  \newtheorem{prop}[thm]{Proposition}
  \theoremstyle{plain}
  \newtheorem{lem}[thm]{Lemma}
  \theoremstyle{definition}
  \newtheorem{defn}[thm]{Definition}
  \theoremstyle{remark}
  \newtheorem*{rem*}{Remark}
  \theoremstyle{plain}
  \newtheorem{cor}[thm]{Corollary}
\begin{document}

\title[A geometric description of cohomology for Hilbert manifolds]{Hilbert stratifolds and a Quillen type geometric description of cohomology for Hilbert manifolds}

\author{Matthias Kreck and Haggai Tene}

\address{Max-Planck Institut \& Mathematisches Institut, Rheinische Friedrich-Wilhelms-Universit\"at, Bonn, Germany} 
\email{kreck@math.uni-bonn.de}

\address{Mathematisches Institut, Universit\"at Heidelberg} 
\email{tene@mathi.uni-heidelberg.de}

\begin{abstract} In this paper we use tools from differential topology to give a geometric description of cohomology for Hilbert manifolds. Our model is Quillen's geometric description of cobordism groups for finite dimensional smooth manifolds \cite{Q}. Quillen stresses the fact that this construction allows the definition of Gysin maps for ''oriented'' proper maps. For finite dimensional manifolds one has a Gysin map in singular cohomology which is based on Poincar\'e duality, hence it is not clear how to extend it to infinite dimensional manifolds.
But perhaps one can overcome this difficulty by giving a Quillen type description of singular cohomology for Hilbert manifolds. This is what we do in this paper. Besides constructing a general Gysin map, one of our motivations was a geometric construction of equivariant cohomology, which even for a point is the cohomology of the infinite dimensional space $BG$, which has a Hilbert manifold model. Besides that, we demonstrate the use of such a geometric description of cohomology by several other applications. 
We give a quick description of characteristic classes of a finite dimensional vector bundle and apply it to a generalized Steenrod representation problem for Hilbert manifolds and define a notion of a degree of proper oriented Fredholm maps of index $0$.

\end{abstract}
\maketitle
\setcounter{tocdepth}{2}
\tableofcontents

\section{Introduction} Quillen \cite{Q} has given a geometric description of cobordism groups for finite dimensional smooth manifolds $M$. Those were originally defined in terms of the Thom spectrum.  His description is, roughly, in terms of bordism classes of {\bf proper} maps from manifolds to $M$. This looks almost identical to the description of the corresponding homology groups which are given as bordism classes of maps from {\bf compact} manifolds to $M$. Quillen \cite {Q}, page 30, gives a motivation for his work: "I  have  been  strongly influenced by Grothendieck's  theory of motives in algebraic geometry and like to think of a cobordism 
theory as a universal contravariant functor on the category of $C^\infty$ manifolds endowed with a Gysin homomorphism for a class of proper ''oriented'' maps, instead of as the generalized cohomology theory given by a specific Thom spectrum." In singular cohomology a Gysin map for "oriented" proper maps between finite dimensional manifolds can be defined using Poincar\'e duality to locally finite homology.  Can one extend this to infinite dimensional manifolds? 

Even before Quillen, the idea of a geometric description of special cohomology classes in a Hilbert manifold $X$ came up in a paper by Eells (\cite {Ee}, page 801 (D)): "Alexander-Pontrjagin duality suggests that a p-codimensional  submanifold $A$ of $X$ should represent some sort of homology class of $X$ of dimension $\infty - p$ ... We are now not in the possession of a definite theorem of that kind."

These two statements are our main motivation to look for a geometric description of singular cohomology for Hilbert manifolds. The idea is to give a Quillen type construction of singular cohomology for Hilbert manifolds. For finite dimensional manifolds the first author \cite{K1} has given a geometric description of singular cohomology using bordism classes of certain stratified spaces, called stratifolds, instead of manifolds. Our idea is to generalize this to infinite dimensions.

The obvious thing to do would be to define Hilbert stratifolds generalizing the finite dimensional stratifolds. We don't see a straightforward way to do this. For the description of cohomology groups of finite dimensional manifolds we do not need to define stratifolds on their own.  The cocycles of our theory are singular stratifolds, these are proper maps $f: \mathcal S \to M$ from a stratifold to $M$. The concept of singular stratifolds in $M$ can be generalized to {\bf oriented singular Hilbert stratifolds $f: \mathcal S \to M$}, where $M$ is a Hilbert manifold. Then one can consider cobordism groups $SH^k(M)$ as in the finite dimensional setting and obtain a contravariant functor. As in the finite dimensional  setting Gysin maps are only defined for certain maps $f: M \to N$, in our situation these are {\bf proper oriented Fredholm maps}. We also define a non oriented version, which leads to cohomology groups $SH^k(M,\mathbb Z /2)$. We will explain the details later. Our main theorems are:

\begin{thm} \label{thm1}The functors $SH^k(M)$ on Hilbert manifolds $M$ together with the coboundary operator $d$ form a cohomology theory on Hilbert manifolds, by which we mean that those are homotopy functors, the coboundary operator $d$ is natural and the Mayer-Vietoris sequence is exact. \\
If $f: M \to N$ is a proper oriented Fredholm map of index $r$, then composition with $f$ defines a Gysin map
$$
f_!: SH^k(M) \to SH^{k-r}(N).
$$
The analogous statements hold for $SH^k(M;\mathbb Z/2)$. 
\end{thm}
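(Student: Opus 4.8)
The plan is to follow the template of the finite-dimensional stratifold description of singular cohomology from \cite{K1}, developing at each step the infinite-dimensional (Fredholm) analogue of the transversality and collar tools on which that argument rests. Functoriality is built into the construction: for a smooth map $g\colon M\to N$ and a cocycle $f\colon \mathcal S\to N$ one forms the pullback $g^*f\colon \mathcal S\times_N M\to M$, and the Gysin map for a proper oriented Fredholm map will be nothing but post-composition. So the work splits into four tasks — homotopy invariance, naturality of $d$, exactness of Mayer--Vietoris, and the good behaviour of composition — each of which reduces to a transversality or a cut-and-paste statement that I must establish in the Hilbert setting.

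For functoriality and homotopy invariance, first I would show that $g$ can be made transverse to $f$ along each stratum after a small homotopy, using the Sard--Smale theorem for the Fredholm maps in play together with the existence of smooth partitions of unity on Hilbert manifolds; one then checks that the fibre product $\mathcal S\times_N M$ inherits the structure of an oriented singular Hilbert stratifold of the correct codimension and that its projection to $M$ is proper. Homotopy invariance follows by applying the same transversality to a homotopy $H\colon M\times[0,1]\to N$: the pullback $H^*f$ is a bordism between $g_0^*f$ and $g_1^*f$. Orientations are transported by the usual normal-data conventions, and the $\mathbb Z/2$ case is obtained by forgetting them.

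The coboundary $d$ is defined geometrically: given a cocycle $f\colon \mathcal S\to U\cap V$, choose a smooth separating function $\rho$ subordinate to the cover $\{U,V\}$, arrange a regular value $t$ of $\rho\circ f$ (again Sard--Smale), and let $d[f]$ be the restriction of $f$ to the cut locus $(\rho\circ f)^{-1}(t)$, a singular Hilbert stratifold of codimension one higher mapping properly to $M=U\cup V$. Naturality of $d$ is then the statement that cutting commutes with transverse pullback, and exactness of
$$
\cdots \to SH^k(M)\to SH^k(U)\oplus SH^k(V)\to SH^k(U\cap V)\xrightarrow{\ d\ } SH^{k+1}(M)\to\cdots
$$
is proved position by position: exactness at $SH^k(U)\oplus SH^k(V)$ by gluing two cocycles that are bordant over the overlap into a single cocycle over $M$; the two remaining exactness statements by producing explicit bordisms from the cut-and-glue construction, exactly as in the finite-dimensional argument of \cite{K1}.

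The Gysin map is the most formal part. For a proper oriented Fredholm map $f\colon M\to N$ of index $r$ and a cocycle $g\colon \mathcal S\to M$ representing a class in $SH^k(M)$, the composite $f\circ g\colon \mathcal S\to N$ is again a singular Hilbert stratifold; properness of $f\circ g$ follows from that of $f$ and $g$, the Fredholm indices add so that the codimension drops from $k$ to $k-r$, and the orientations compose, giving $f_![g]=[f\circ g]\in SH^{k-r}(N)$. Well-definedness on bordism classes and functoriality of $f\mapsto f_!$ are then routine. The main obstacle throughout is the very first one: securing transversality for the Fredholm maps that arise, and verifying that pullbacks, cut loci, and composites remain genuine \emph{proper} oriented singular Hilbert stratifolds of the predicted finite codimension — the step that is automatic in finite dimensions but here carries the full weight of the infinite-dimensional generalization.
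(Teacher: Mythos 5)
Your description of the Gysin map (post-composition, properness and orientations compose, indices add) matches the paper exactly, and that part is indeed the formal one. The gap is in everything you build on transversality. You propose to define $g^*f$ for an arbitrary smooth map $g\colon M\to N$ by perturbing $g$ to be transverse to the Fredholm cocycle $f\colon\mathcal S\to N$ ``using the Sard--Smale theorem,'' and to get homotopy invariance by the same move applied to $H\colon M\times[0,1]\to N$. But Smale's transversality theorem only lets you perturb maps with \emph{finite-dimensional} source into general position with respect to a Fredholm map; there is no Sard-type theorem for perturbing a map between infinite-dimensional Hilbert manifolds, because the relevant composites are not Fredholm and their critical values need not be small. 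The paper uses Smale's theorem exactly in the finite-dimensional-source form (for the cap product with $p$-stratifolds, Proposition \ref{transversality}) and explicitly flags your step as the obstruction: every map is homotopic to a submersion (for which pullback is automatic), but it is not clear that homotopic submersions induce the same map. The paper's actual route is different: factor $f$ as $M\xrightarrow{(id,f)}M\times N\xrightarrow{p}N$, define $p^*$ by pullback (it is a submersion) and $(id,f)^*$ via a tubular neighborhood $U$ of the graph together with $(\pi^*)^{-1}\colon SH^k(U)\to SH^k(M)$. The whole weight is carried by Proposition \ref{iso} (that $\pi^*$ is an isomorphism for vector bundles over Hilbert manifolds), proved via the Eells--Elworthy exhaustion by opens diffeomorphic to $(\text{finite-dim})\times H$, a telescope, Mayer--Vietoris and the five lemma; the base case is where Smale's theorem enters, applied to the genuinely Fredholm composite $\mathcal S\to M'\times H\to H$. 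Homotopy invariance then falls out of uniqueness of tubular neighborhoods, not transversality. Your proposal contains no substitute for Proposition \ref{iso} and no argument for well-definedness of the transverse pullback, so it does not get off the ground.

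The same problem infects your coboundary operator: for a cocycle $f\colon\mathcal S\to U\cap V$ with $\mathcal S$ infinite dimensional, the composite $\rho\circ f\colon\mathcal S\to\mathbb R$ has infinite-dimensional kernel at every point, hence is not Fredholm, and Sard--Smale supplies no regular value $t$; in infinite dimensions the critical values of such a function can contain whole intervals, so the ``cut locus'' $(\rho\circ f)^{-1}(t)$ need not be a stratifold for any $t$. The paper avoids cutting altogether: Dold's thickening replaces $(M;U,V)$ by a triple in which $(U\cap V)\times\{0\}$ is a closed separating submanifold, and $d$ is defined as the Gysin map (i.e.\ plain composition) of this codimension-one inclusion, conjugated by the homotopy equivalences $\pi^*$ --- which again rests on Proposition \ref{iso} rather than on any general-position argument. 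Exactness is then quoted from the finite-dimensional argument of \cite{K1} in the special product situation and transported by the thickening. If you want to salvage your outline, the essential missing ingredient to supply is the vector-bundle isomorphism $\pi^*\colon SH^k(M)\to SH^k(E)$ and the graph-factorization definition of induced maps built on it.
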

\begin{thm} \label{natural iso} There are natural isomorphisms of multiplicative cohomology theories
$$
H^k(M;\mathbb Z) \to SH^k(M),
$$
and
$$
H^k(M;\mathbb Z/2) \to SH^k(M;\mathbb Z/2).$$

\end{thm}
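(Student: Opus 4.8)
The plan is to construct a natural transformation $\Phi_M\colon H^k(M;\mathbb Z)\to SH^k(M)$ of functors on Hilbert manifolds, to check that it commutes with the coboundary $d$ (equivalently, with the Mayer--Vietoris sequences of the two theories), and then to prove that each $\Phi_M$ is an isomorphism by a Mayer--Vietoris induction that reduces everything to the value on a point. By Theorem \ref{thm1} we already know that $SH^*$ is a homotopy functor with a natural $d$ and exact Mayer--Vietoris sequences, and singular cohomology has exactly the same formal properties; so a natural transformation commuting with $d$ that is an isomorphism on a sufficiently large class of Hilbert manifolds will automatically be an isomorphism on all of them, and it will be multiplicative as soon as it is multiplicative in a single universal case. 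The $\mathbb Z/2$ statement is formally identical, replacing oriented stratifolds by unoriented ones and $\mathbb Z$ by $\mathbb Z/2$ throughout, so I will concentrate on the integral case.

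The first step is the base-case computation. Since the model Hilbert space $\mathbb H$ is contractible and $SH^*$ is a homotopy functor, it suffices to evaluate $SH^*(\mathrm{pt})$, and the key lemma is $SH^k(\mathrm{pt})\cong H^k(\mathrm{pt};\mathbb Z)$, i.e. $\mathbb Z$ for $k=0$ and $0$ otherwise. Geometrically this holds because properness of a Fredholm cocycle over a point forces the representing Hilbert stratifold to be compact, hence finite-dimensional; the computation then collapses to the finite-dimensional stratifold bordism of a point treated in \cite{K1}, where the codimension-zero classes are counted with sign by a single integer and all higher-codimensional cocycles bound. Granting this, $SH^*$ is an \emph{ordinary} cohomology theory with the same coefficients as $H^*$, and I would build $\Phi$ by the standard mechanism for ordinary theories with equal coefficients: on a Hilbert-manifold model of $M$ one represents a class of $H^k(M;\mathbb Z)$ by a map to a (Hilbert-manifold model of an) Eilenberg--MacLane space and pulls back a universal $SH$-class, the transformation being pinned down canonically by its value on the point. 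On finite-dimensional manifolds this is compatible with the geometric description of \cite{K1}, and it is multiplicative because the cup product in $SH^*$ is modeled by transverse fibre product of stratifolds, which matches the finite-dimensional intersection description of cup product.

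The heart of the proof is showing that $\Phi_M$ is an isomorphism for \emph{every} Hilbert manifold $M$. Using the structure theory of (separable) Hilbert manifolds I would present $M$ as an open subset of $\mathbb H$ and choose a countable open cover by convex sets; these and all of their finite intersections are convex, hence contractible, so $\Phi$ is an isomorphism on each of them and on each finite union by homotopy invariance, the base-case computation, and a Mayer--Vietoris five-lemma induction. The main obstacle is the passage from finite unions to all of $M$: a Hilbert manifold is never locally compact, its good covers are genuinely infinite, and singular cohomology is not continuous, so one cannot simply take a colimit. The way I would overcome this is to show that both theories fit into the same Milnor $\varprojlim^1$ short exact sequence for an increasing exhaustion $U_1\subset U_2\subset\cdots$ of $M$ by finite unions of cover elements. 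For $H^*$ this is classical; for $SH^*$ it must be derived from the exactness of Mayer--Vietoris in Theorem \ref{thm1} via a telescope argument, together with a proper-bordism argument showing that an $SH$-cocycle on $M$ is determined by its restrictions to the $U_n$ and that compatible families glue. I expect this $\varprojlim^1$/gluing step to be the only genuinely infinite-dimensional difficulty.

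Once both theories admit the same Milnor sequence and $\Phi$ is an isomorphism on each $U_n$, it induces an isomorphism of the towers $\{SH^*(U_n)\}$ and $\{H^*(U_n)\}$, hence of their $\varprojlim$ and $\varprojlim^1$ terms, and the five lemma applied to the two short exact sequences gives that $\Phi_M$ is an isomorphism. Finally I would record that $\Phi$ is multiplicative and compatible with the Gysin maps of Theorem \ref{thm1}, the former by the fibre-product description of the cup product already used above, and note that the identical argument with unoriented Hilbert stratifolds and $\mathbb Z/2$ coefficients yields the second isomorphism. Everything apart from the Milnor-sequence step is a formal transport of the finite-dimensional comparison of \cite{K1} through the axioms established in Theorem \ref{thm1}.
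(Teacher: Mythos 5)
Your construction of $\Phi$ coincides with the paper's: represent integral cohomology by homotopy classes of maps into a Hilbert-manifold model of the Eilenberg--MacLane space $K(\mathbb Z,k)$, produce a universal class $\iota_k\in SH^k(K(\mathbb Z,k))$, and pull it back, checking multiplicativity and compatibility with $d$ in the universal case. Where you diverge is in the proof that $\Phi_M$ is an isomorphism. The paper does not use convex covers or a Milnor $\varprojlim^1$ sequence: it invokes the Eells--Elworthy theorem to exhaust $M$ by open sets $Z_n$, each diffeomorphic to (finite-dimensional manifold)$\,\times H$, replaces $M$ by the diffeomorphic telescope $\bigcup_n Z_n\times(n,\infty)$, and splits the telescope into two open pieces $U,V$ such that $U$, $V$ and $U\cap V$ are each diffeomorphic to a disjoint union of $Z_n$'s. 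A single Mayer--Vietoris sequence, Milnor additivity, the computation for $M'\times H$ (Lemma \ref{projection induces iso}) and the five lemma then finish the argument. Your route can be made to work, but the $\varprojlim^1$ sequence for $SH^*$ is itself only obtainable by this telescope construction, so you are repackaging the paper's argument with an extra limit computation rather than avoiding it; note also that additivity is genuinely needed at this point, which is why the paper permits infinitely many nonempty strata.

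Two steps need repair. First, the coefficient computation cannot be ``reduced to the point by contractibility'': $SH^*$ is defined only on infinite-dimensional Hilbert manifolds, so the object to compute is $SH^*(H)$, and a proper Fredholm cocycle $f:\mathcal S\to H$ is neither compact nor finite-dimensional. The paper's Lemma \ref{projection induces iso} and Corollary \ref{coefficients} handle this by applying Smale's infinite-dimensional Sard theorem to produce a regular value of $f$, whose preimage is a finite-dimensional stratifold $\mathcal S'$, and by constructing an explicit bordism from $f$ to $\mathcal S'\times H\to H$; that transversality input is not optional. Second, your proposed ``proper-bordism argument showing that an $SH$-cocycle on $M$ is determined by its restrictions to the $U_n$ and that compatible families glue'' is the one step I would not trust: properness does not glue over an increasing union (a map proper over every $U_n$ need not be proper over $M$), and the failure of compatible families to lift is precisely what the $\varprojlim^1$ term measures, so this passage must be carried out formally via the telescope and not by geometric gluing. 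With those two points replaced by Lemma \ref{projection induces iso} and the telescope argument of Proposition \ref{iso}, your outline matches the published proof.
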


There are other reasons for looking for a geometric theory. The geometric nature of singular homology (one catches a hole by a chain of simplices) is very useful both in theory and in computations. One would like to use similar methods for the more "abstract" singular cohomology. For example, representing cohomology classes by submanifolds can be useful for computing cup products by intersection or for computing induced maps. Also characteristic classes have a simple description in this language. 
Since not all cohomology classes are represented by submanifolds, or even by proper maps from a manifold, one needs, in general, singular objects like stratified spaces. Given this one can compute cup products and similar pairings geometrically by taking transversal intersections.

Another motivation for  our work is an extension of the geometric description of singular cohomology to equivariant cohomology (in terms of the Borel construction) for compact Lie group $G$ actions. In the case of equivariant homology such an extension is very simple. The groups are given by bordism classes of equivariant maps from compact stratifolds with a free $G$-action to $M$ \cite{T2}. Similar groups were constructed by MacPherson \cite{Mac} using pseudomanifolds instead of stratifolds. As in the non-equivariant case, one could consider the corresponding theory given by proper equivariant maps from stratifolds with a free $G$-action to $M$. This is a cohomology theory (which was studied in \cite{T2}), but it is not the cohomology of the Borel construction. The modification needed is to replace $M$ by $M \times EG$, as expected from the Borel construction. Since $EG$ is not a finite dimensional manifold (unless $G$ is trivial) we can no longer work in the finite dimensional setting. To solve this, we note that there is a model for $EG$ as a Hilbert manifold. If one considers $M \times EG$, it is clear that also the stratifolds have to be replaced by stratifolds whose strata are Hilbert manifolds.

Besides the geometric description of equivariant cohomology with the main application of the construction of a general Gysin map, we will demonstrate the use of a geometric description in some examples. We give a quick description of characteristic classes of finite dimensional vector bundles and define a notion of degree for proper Fredholm maps of index 0. 

\section{Hilbert Manifolds and Fredholm Maps}

In this part we discuss some basic properties of Hilbert manifolds
and Fredholm maps. We refer to \cite{L} regarding Hilbert manifolds
and to the appendix in \cite{A} regarding Fredholm maps. 

\subsection{Definitions and fundamental properties}

~

A smooth Hilbert manifold is a manifold modeled on a real Hilbert
space with smooth transition maps. We will always assume that the
Hilbert spaces $H$ are {\bf infinite dimensional and separable and the manifolds
are Hausdorff and admit a countable basis}. From now on we will refer
to smooth Hilbert manifolds just as Hilbert manifolds. A pair $(M,\partial M)$
is called a Hilbert manifold with boundary if it has an atlas modelled
on the upper half space - $[0,\infty)\times H$. The boundary
$\partial M$ is the set of points which are mapped onto $H\times\{0\}$
by some chart. It can be shown that the boundary is a Hilbert
manifold and has a collar. 

The theory of Hilbert manifolds enjoys many nice properties, we state a few:
\begin{enumerate}
\item (\cite{EE}, Theorem 1A) Every infinite dimensional smooth Hilbert
manifold is diffeomorphic to an open subset of the Hilbert
space.
\item (\cite{Ku}, Theorem 2) Let $H$ be an infinite dimensional (real
resp.\ complex) Hilbert space then the general linear group $GL(H)$
(over $\mathbb{R}$ resp.\ $\mathbb{C}$) is contractible. This implies
that every vector bundle with structure group $GL(H)$ over a space which has the homotopy type of a $CW$ complex is
trivial.
\item (\cite{BH}, Theorem 0.1) Any homotopy equivalence of pairs $f:(N,\partial N)\to(M,\partial M)$
between smooth infinite dimensional Hilbert manifolds with boundary
is homotopic to a diffeomorphism of pairs.
\end{enumerate}
Let $V$ and $W$ be two Hilbert spaces. A bounded linear operator
$f:V\to W$ having both finite dimensional kernel and cokernel is
called a Fredholm operator. The index of $f$ is defined to be: \[
index(f)=dim(ker \, \, f)-dim(coker \, \, f).\]
Note that if $V$ and $W$ are finite dimensional then the index is independent of $f$: \[
index(f)=dim(V)-dim(W).\]

This definition can be extended to smooth maps between Hilbert manifolds.
Let $M$ and $N$ be two Hilbert manifolds. A smooth map $f:M\to N$
is called a {\bf Fredholm map} if the differential at each point is Fredholm.
The index of $f$ at a point $x\in M$ is given by $ind(df_{x})$,
and it is locally constant. 
If $\left\{ ind(df_{x})|x\in M\right\} $ is bounded from above then
we call the upper bound the index of $f$ and denote it by $ind(f)$.

\section{Singular Hilbert Stratifolds}

We now define Hilbert stratifolds. We first repeat the definition of finite dimensional stratifolds from \cite{K1}  since the basic structure is the same, although the setting is diffferent. We consider a toplogical space $\mathcal S$ togehter with its sheaf of continuous fuctions, considered as a sheaf of $\mathbb R$-algebras. We will consider subsheaves $F$ of the sheaf of continuous functions, where $F(U)$ is a subalgebra for each open subset $U$. Without extra mentioning, all sheaves in the following are of this type. This is the only datum we need for the definition of stratifolds, the stratifolds are given by such a sheaf $F$ which fulfills certain properties. 

To formulate the properties we introduce certain notations. If $(\mathcal S,F)$ and $(\mathcal S',F')$ are as before we call a continuous map $f: \mathcal S \to \mathcal S'$ a  {\bf smooth map from $(\mathcal S,F)$ to $(\mathcal S',F')$} if $\rho f \in F(f^{-1}(U)$ for all $\rho \in F'(U)$.  A bijective map $f$, such that $f$ and $f^{-1}$ are smooth, is called a {\bf diffeomorphism}. If  $M$ is a smooth manifold and $F$ is the sheaf of smooth functions then this definition of smooth functions and diffeomorphisms agrees with the ordinary definition in terms of local coordinats.  In turn, a pair $(\mathcal S,F)$ which is locally diffeomorophic to $(\mathbb R^n, C^{\infty}( \mathbb R^n))$ induces a natural structure of a smooth manifold on $\mathcal S$ where the smooth atlas is given by all local diffeomorphisms to $(\mathbb R^n, C^{\infty} (\mathbb R^n))$.

There are seven conditions a stratifold has to fulfill. In particular, these conditions will imply that $\mathcal S$ is the union of  pairwise disjoint subsets called strata, such that these subsets, together with the restriction of the sheaf $F$, are smooth manifolds. To define these subsets we recall that given the sheaf $F$  we can consider at a point $x \in S$ the stalk $\Gamma(F)_x$ of smooth functions and define the {\bf tangent space $T_x(\mathcal S)$  at $x$}  as the vector space of derivations of $\Gamma (F)_x$. If $(\mathcal S,F)$ is a smooth manifold this is one of the definitions  of the  tangent space. We define the {\bf $r$-stratum} as 
$$
\mathcal S^r := \{x \in \mathcal S|\, dim T_x(\mathcal S) = r\},
$$
and the {\bf $r$-skeleton} as 
$$
\Sigma ^r := \cup _{i\le r} \mathcal S^i.
$$

\begin{defn} An {\bf $n$-dimensional stratifold} is a pair $(\mathcal S,F)$ such that the following conditions are fulfilled:\\
I) $\mathcal S$ is a Hausdorff space admitting a countable basis. \\
II) $\mathcal S^r$ is empty for  $r>n$.\\
III) The skeleta are closed subspaces. \\
IV) For each $x \in \mathcal S$ and open neighborhood $U$ there is a function $\rho: \mathcal S \to \mathbb R_{\ge 0}$ in $F$ such that $supp \, \rho \subseteq U$ and $\rho (x) \ne 0$.\\ 
V) The strata together with the restriction of $F$ are smooth manifolds.\\
VI) For each $x \in \mathcal S^r$ the restriction of germs is an isomorphism:
$$\Gamma (F)_x \to \Gamma (F|_{\mathcal S^r})_x.$$\\
VII)  If $f_1$, $f_2$, ..., $f_n$ are smooth functions in $F$ and $g: \mathbb R^n \to \mathbb R$ is smooth, then $g(f_1,...,f_n)$ is a smooth function in $F$.\\

\end{defn}

\begin{rem}
Condition IV implies that $\mathcal S$ is a regular topological space, which together with the fact that it is second countable (by I), implies it is paracompact. It is easy to see that using paracompactness and the functions in IV, one can obtain a smooth partition of unity with respect to every open covering.
\end{rem}

For more details about  stratifolds see \cite{K1}. In this book bordism classes of certain stratifolds were used to define a homology theory for topological spaces, which for $CW$-complexes agrees with singular homology, and a cohomology theory for smooth manifolds, which agrees with singular cohomology. In this paper we study the generalization of the cohomology theory to infinite dimensional manifolds. 

It is natural to ask how this relates to other concepts of stratified spaces in differential topology, in particular Mather's abstract pre-stratified spaces \cite{Ma}. Anna Grinberg has analyzed this carefully and proved that an abstract pre-stratified space has the structure of a stratifold. But Mather has additional data, which we don't need for our purpose, namely a certain control function $\rho$. He also uses the language of tubular neighborhoods and retracts, whereas we use the sheaf language in which we formulate conditions which give Mather's neighborhoods and retracts. For details we refer to Grinberg's paper \cite{Gr}. 

There is an obvious problem with a generalization of the concept of stratifolds to infinite dimensions, since the dimension of the tangent spaces cannot be used to distinguish the strata. For that reason, we have to pass to singular Hilbert stratifolds in a Hilbert manifold $M$. For our purpose this is enough since the singular Hilbert stratifolds in $M$ will be the cocycles of our cohomology theory - as in the finite dimensional setting. 

Let $M$ be a Hilbert manifold which we equip with the sheaf of smooth functions. We consider a topological space $\mathcal S$ together with a subsheaf $F$ of the sheaf of continuous functions as before togehter with a smooth map $f: \mathcal S \to M$. Now we can consider for each $x \in \mathcal S$ the differential $df_x: T_x \mathcal S \to T_xM$ of $f$ at $x$. We call $f$ a {\bf Fredholm map} if $df_x$ has finite kernel and cokernel for each $x \in \mathcal S$. At each point $x \in \mathcal S$ we consider the index $ind \, df_x:= dim \, ker (df_x) - dim \, coker (df_x)$. We note that if $M$ is a point, then if $f : \mathcal S \to pt$ is  a Fredhom map, this implies that all strata as defined above are finite dimensional and the dimension is the index of $df_x$ at a point $x$  in the stratum. Thus the following is a direct generalization of the concept of strata to the infinite dimensional setting, if we consider the case where $M$ is a point. 

We define the {\bf $r$-stratum} as 
$$
\mathcal S^r := \{x \in \mathcal S|\, ind \, df_x = r\},
$$
and the {\bf $r$-skeleton} as 
$$
\Sigma ^r := \cup _{i\le r} \mathcal S^i.
$$

With these concepts the definition of a singular Hilbert stratifold is a generailzation of the finite dimensional setting:

\begin{defn} A {\bf singular Hilbert $n$-stratifold} in a Hilbert manifold $M$ is given by a space $\mathcal S$ together with a sheaf $F$ of continuous functions as before and a proper Fredholm map 
$$
f : \mathcal S \to M
$$
such that the conditions I) - VII) are fulfilled. Two singular Hilbert $n$-stratifolds $(\mathcal S ,f)$ and $(\mathcal S' ,f')$ are isomorphic if there is an homeomorphism $g:\mathcal S \to \mathcal S'$ which induces an isomorphism of sheaves and such that $f'g=f$.

For the definition of cohomology classes we need - as in the finite dimensional setting - the concept of {\bf regular singular Hilbert stratifolds}. This means that each $x$ in the $k^{th}$ stratum $\mathcal S^k$ has an open neighbourhood $V$ in $\mathcal S$ and a finite dimensional stratifold $F$ whose $0^{th}$ stratum consists of a single point $*$ such that the restriction to $V$ is isomorphic to $(V \cap \mathcal S^k)  \times F$, and the restriction of $f$ to $(V \cap \mathcal S^k)\times \{ *\}$ is given by the projection on the first factor.   

\end{defn}

The constructions made in \cite {K1}  generalize without any change to the infinite dimensional setting. In particular the construction of the product of two singular Hilbert stratifolds using local retractions works in our context. This was used above  when we defined regular stratifolds and will later be used for  the cross product in cohomology. 

\begin{rem}
For finite dimensional stratifolds the number of non-empty strata is automatically finite. In the infinite dimensional setting this is not implied and we don't require it. The reason is that otherwise the theory we get is, a priory, not an additive cohomology theory in the sense of Milnor, i.e. the cohomology of a disjoint union is the product of the cohomology of the components.
\end{rem}

Simple examples of singular Hilbert stratifolds are given by the product of a finite dimensional singular stratifold $f : \mathcal S \to P$ ($\mathcal S$ and $P$ finite dimensional) with a Hilbert manifold $M$ to obtain an infinite dimensional singular stratifold in $P \times M$:
$$
f \times id: \mathcal S \times M \to P \times M.
$$

We also define Hilbert stratifolds with boundary. We do it in an analogous
way to the way it is done for finite dimensional stratifolds. A singular 
Hilbert stratifold with boundary in a Hilbert manifold $M$ is given by the following data:\\
1) A topological space $T$ and a closed subspace $\partial T$.\\
2)  A continuous map $f: T \to M$, whose restrictions to $\partial T$ and $T - \partial T$ are singular Hilbert stratifolds.\\
3) A homeomorphism $c:\partial T\times[0,\varepsilon)\to U \subseteq T$, the collar of $T$,  for
some $\varepsilon>0$, where $U$ is an open neighborhood of $\partial T $ in $T$, whose restriction to the boundary is the identity and to the complement of the boundary an isomorphism of stratifolds.\\
4) The map $f$ has to commute with the retract given by the collar $c$, i.e. $fc(x,t) = f(x)$.

There are two things which relate singular Hilbert stratifolds and singular Hilbert
stratifolds with boundary. First, the boundary of a singular Hilbert stratifold
with boundary is itself a singular Hilbert stratifold (without boundary). Secondly, one can 
glue two singular Hilbert stratifolds with boundary along an isomorphism
between their boundary components to obtain a singular Hilbert stratifold. This is done using
the collar of the boundary in the same way as in the
case of finite dimensional stratifolds as described in \cite{K1}. 

A singular Hilbert $k$-stratifold is called orientable if its $-(k+1)$ stratum is empty and the restriction of the map to the $-k$ stratum is orientable in the sense of \cite{KT-o} (that is, an orientation of the determinant line budle). In this case, an orientation is a choice of an orientation of the restriction to the $-k$ stratum. We want to define induced orientations on the boundary.  If $F|_{T - \partial T}$  is a 
singular Hilbert stratifold with boundary with an orientation in the interior of $T$, then there is an induced orientation on the boundary by requiring that the collar $c$ is compatible with the product orientation on the cylinder as explained in \cite{KT-o} and the orientation of $F|_{T - \partial T}$. By construction, an orientation of the cylinder induces opposite orientations on the two ends.

\section{Stratifold Cohomology for Hilbert Manifolds}

With these definitions, all concepts and theorems for the stratifold cohomology of finite dimensional manifolds can be generalized to Hilbert manifolds. 

For the sake of brevity, we give the following definition.
\begin{defn}
A {\bf geometric $k$-cocycle} is an oriented, regular, singular Hilbert k-stratifold, and a {\bf non oriented geometric $k$-cocycle} is a regular, singular Hilbert k-stratifold.  
\end{defn}

\begin{defn}
We define $SH^{k}(M)$ to be the set of geometric $k$-cocycles in $M$ modulo cobordism. Here we also require that the bordisms are oriented, regular singular Hilbert stratifolds. Addition
is given by disjoint union, the inverse is given by reversing the orientation. 
\end{defn}

\begin{rem*}
To see that $SH^{k}(M)$ is a set, note that all Hilbert stratifolds
are of bounded cardinality (continuum) and topologies and sheaves
of real functions defined on a given set is again a set.
\end{rem*}

\subsection{Induced maps}

~

In the finite dimensional setting, for a map $f:M\to N$ which is transversal to a map from a stratifold $g:\mathcal S \to N$ the pull back is a singular stratifold in $M$, which gives a cohomology class in $M$. In general we approximate $f$ by a map which is transversal to $g$ to define the induced map.

The pull back of transversal intersection of Hilbert manifolds has a natural structure of a Hilbert manifold, and similarly for Hilbert stratifolds. It is also easy to see that the pull back of the orientation line bundle is naturally isomorphic to the orientation line bundle of the pull back.  These facts imply that in the case of a submersion the pull back induces a well defined homomorphism in cohomology.
It is also known that every continuous map between Hilbert manifolds is homotopic to a submersion (even an open embedding) by theorem 8.4 in \cite{BK}. The problem is that it is not clear why the induced maps given by two homotopic submersions are equal. Therefore, we give an alternative description for induced maps for general maps.

We start with a definition for submersions, where induced maps are defined using pull back. 
We will reduce the induced map for a general map $f:M\to N$ to the case of submersions by the
standard trick factoring $f$ over $N \times M$. The trick of extending induced maps from submersions to continuous maps is useful in other contexts, hence the details appear in a separate note \cite{KT}. 
To carry this out we need the following proposition, whose proof, which is technical, we postpone to the end of this subsection: 

\begin {prop} \label{iso} 
Let $\pi: E \to M$ be the projection of a vector bundle with fibers finite dimensional or Hilbert space $H$ over a Hilbert manifold $M$. Then 
$$
\pi^*:SH^k(M) \to SH^k(E)
$$
is an isomorphism.
\end{prop}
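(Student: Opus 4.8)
The plan is to construct an explicit inverse to $\pi^{*}$, namely the ``restriction to the zero section'' map, and to verify that the two composites are the identity. Write $s\colon M\to E$ for the zero section. Since $s$ is not a submersion I cannot quote the induced map for submersions; instead I would define $s^{*}\colon SH^{k}(E)\to SH^{k}(M)$ geometrically, by transversal intersection: given a geometric cocycle $h\colon\mathcal T\to E$, first make $h$ transverse to $s(M)$ and then take the pullback stratifold $h^{-1}(s(M))\to M$. Granting that this is well defined on cobordism classes, the whole statement reduces to the two identities $s^{*}\pi^{*}=\mathrm{id}_{SH^{k}(M)}$ and $\pi^{*}s^{*}=\mathrm{id}_{SH^{k}(E)}$.

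The first identity is essentially formal. For a cocycle $g\colon\mathcal S\to M$ the submersion pullback $\pi^{*}g$ is the total space of the induced bundle $g^{*}E\to E$, and this bundle map is automatically transverse to the zero section, meeting it exactly in the zero section $\mathcal S\subseteq g^{*}E$; hence $s^{*}\pi^{*}g=g$ on the nose. A short differential computation shows that transverse intersection with $s(M)$ changes neither the Fredholm index nor the degree: if $b$ denotes the fibre component of $h$ near $h^{-1}(s(M))$, then on the zero set of $b$ the differential $db$ is onto the fibre, and comparing the kernels and cokernels of $d(\pi\!\circ\!h)$ restricted to $h^{-1}(s(M))$ with those of $dh$ shows that they agree, so $h^{-1}(s(M))\to M$ is again a $k$-cocycle. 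Thus $s^{*}$ has the right target and $s^{*}\pi^{*}=\mathrm{id}$, which already yields injectivity of $\pi^{*}$.

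For the second identity I would use the fibrewise scaling homotopy $R\colon E\times[0,1]\to E$, $R(e,t)=t\!\cdot\!e$, but realised as a \emph{pullback} rather than as a deformation of $h$. Concretely, after arranging $R\pitchfork h$, form the fibre product
$$
\mathcal W=\{(e,t,x)\in E\times[0,1]\times\mathcal T\ :\ t\!\cdot\!e=h(x)\},
$$
a singular Hilbert stratifold with boundary, together with the proper Fredholm map $\mathcal W\to E$, $(e,t,x)\mapsto e$. Its two ends are computed directly: over $t=1$ one recovers $h$ itself, while over $t=0$ the equation forces $x\in h^{-1}(s(M))$ and constrains $e$ only to the fibre over $\pi h(x)$, so the end is the total space $g^{*}E\to E$ with $g=s^{*}h$, that is $\pi^{*}s^{*}h$. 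This exhibits $h$ and $\pi^{*}s^{*}h$ as cobordant, i.e. $\pi^{*}s^{*}=\mathrm{id}$, and together with the first identity proves that $\pi^{*}$ is an isomorphism with inverse $s^{*}$. The reason to use the pullback is that properness then comes for free: the preimage of a compact $K\subseteq E$ consists of points with $h(x)=t\!\cdot\!e\in\bigcup_{t\in[0,1]}t\!\cdot\!K$, a compact ``cone'' on $K$, so $x$ is confined to the compact set $h^{-1}(\bigcup_{t}t\!\cdot\!K)$. By contrast the naive radial deformation of $h$ is \emph{not} proper, and this is exactly the difficulty the fibre-product construction circumvents.

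The main obstacle is the transversality input in the Hilbert setting. Since the zero section has infinite codimension when the fibre is $H$, making $h$ (and the bordisms realising a given class) transverse to $s(M)$, and arranging $R\pitchfork h$, is not covered by ordinary Sard's theorem; it requires a Sard--Smale argument for the Fredholm map $h$, and the homotopy $R$ degenerates precisely along $t=0$, where its image is only $TM$ together with the radial line, so this end needs separate care. One must also check that the resulting fibre products are \emph{regular} singular Hilbert stratifolds, with the collar supplied by the $[0,1]$-factor, and that all orientations match; the latter is immediate from the already noted fact that the pullback of the orientation line bundle is the orientation line bundle of the pullback, so $\pi^{*}$ and $s^{*}$ are orientation-compatible. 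For finite dimensional fibres the same argument applies verbatim with ordinary transversality in place of Sard--Smale.
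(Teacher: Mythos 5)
Your overall architecture (exhibit restriction to the zero section as an explicit inverse of $\pi^{*}$, with $\pi^{*}s^{*}=\mathrm{id}$ proved by realising the fibrewise scaling as a pullback cobordism) is close in spirit to what the paper does in its lemma on transversal pull backs, and your properness argument for $\mathcal W$ is sound. The gap is the transversality input, which you correctly flag as the main obstacle but then dispose of too quickly. Sard--Smale does not give what you need: to make $h\colon\mathcal T\to E$ transverse to $s(M)$ you must arrange that $0$ is a regular value of the fibre component $b\colon h^{-1}(U)\to H$, and this map is \emph{not} Fredholm when $M$ is infinite dimensional --- its kernel at $x$ is $dh_x^{-1}(T_mM)$, which is infinite dimensional because $\mathrm{im}\,dh_x$ has finite codimension in $T_mM\oplus H$. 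Smale's Sard theorem applies only to Fredholm maps, and his transversality theorem only perturbs maps from \emph{finite dimensional} manifolds into the target of a Fredholm map; neither lets you perturb $h$ (a proper map from a stratifold, where properness and the sheaf structure must also be preserved) so as to meet an infinite-codimensional submanifold transversally. The same objection applies to your ``verbatim'' treatment of finite dimensional fibres, where $b$ still has infinite dimensional kernel. So $s^{*}$ is not yet defined on all of $SH^{k}(E)$, and neither composite identity can be run; the well-definedness on cobordism classes needs the same (unavailable) relative transversality for bordisms.

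The paper's proof is built precisely to avoid this. It first proves the statement when the base is $M'\times H$ with $M'$ finite dimensional: there the composite $\mathcal S\to M'\times H\to H$ \emph{is} Fredholm, Smale's theorem supplies a regular value (translated to $0$), and the scaling cobordism --- realised, exactly as in your $\mathcal W$, as a pullback along a map of the target rather than a deformation of the cocycle --- shows every class has the form $\mathcal S'\times H\to M'\times H$. The general case is then reduced to this one by the Eells--Elworthy exhaustion $M=\bigcup Z_n$ with $Z_n\cong(\text{finite dimensional})\times H$, Milnor's telescope, additivity, Mayer--Vietoris and the five lemma; finite dimensional fibres are handled afterwards via $E\times H\to E\to M$. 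If you want to keep your direct approach you would first have to establish that every class on $E$ admits a representative transverse to the zero section, and at present the only available route to that statement is this same reduction.
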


Assuming that, we define induced maps for the inclusion $i$ of a closed (as topological space)  submanifold $M \subseteq N$ of infinite codimension. To do that, choose a tubular neighborhood $U$ of $M$ in $N$ with projection $\pi$. Given a class in the cohomology of $N$, restrict it to $U$ using the fact that the inclusion of $U$ in $N$ is a submersion. Then apply $(\pi^*)^{-1}:SH^k(U)\to SH^k(M)$ using Proposition \ref{iso}. By the uniqueness theorem for tubular neighborhoods this is independent of the choice of $U$. In general, we define induced maps as follows 
$$
f^* := i^* p^*: SH^k(N) \to SH^k(M),
$$
where $p$ is the projection $M \times N \to N$ and $i$ is the inclusion of $M$ to $M \times N$ given by $(id,f)$. Note, that a homotopy between $f$ and $f'$ induces an isotopy between $(id,f)$ and $(id,f')$ and between their tubular neighbourhoods. This implies that the induced map depends only on the homotopy class of $f$.

This definition is a bit unsatisfactory since it uses the inverse of the map induced by the retraction of the tubular neighbourhood. For concrete computations it is better to consider the map
$$
i^* : SH^k(M) \to SH^k(U)
$$
instead. We will later see, after we have defined the Kronecker product and the linking pairing,  how to decide whether two classes in $SH^k(U)$ agree.

As usual, one can define induced maps for arbitrary continuous maps $f: M \to N$  by approximating $f$ by a homotopic smooth map $g$. 

In the note \cite{KT} we show (in a more general setting) that this definition for a submersion agrees with the original one given by the pull back. In particular, $id^* = id$, and  $(fg)^* = g^*f^*$. Thus we have a contravariant homotopy functor on the category of Hilbert manifolds and continuous maps. 

We would like to relate this definition to the standard definition using transversality. We first prove the following lemma.
\begin{lem}\label{l}
Let $M$ be a Hilbert manifold and $H$ the Hilbert space, and consider the inclusion of the zero section $i:M \to M \times H$. If $\alpha\in SH^k(M\times H)$ is represented by a geometric cocycle $g:\mathcal S \to M\times H$ which is transversal to the zero section then $i^*(\alpha)$ is represented by the pull back $g':\mathcal S' \to M$.
\end{lem}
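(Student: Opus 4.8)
\emph{Plan.} The strategy is to reduce the statement to an identity involving the projection and then to exhibit an explicit cobordism. First I would observe that $M\times H$ is itself a tubular neighborhood of the zero section $i(M)=M\times\{0\}$, with projection $\pi=\mathrm{pr}_M:M\times H\to M$. Hence, by the very definition of the induced map for a closed submanifold of infinite codimension given after Proposition~\ref{iso}, one has $i^*=(\pi^*)^{-1}$, and the assertion of the lemma is equivalent to $\pi^*[g']=[g]$ in $SH^k(M\times H)$. Since $\pi$ is a submersion, its induced map is computed by the pullback, i.e.\ by the fibre product; the fibre product of $g':\mathcal S'\to M$ with $\pi$ is $\mathcal S'\times H$ with map $g'\times\mathrm{id}_H:(s,h)\mapsto(g'(s),h)$. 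Thus it remains to prove $[g]=[g'\times\mathrm{id}_H]$.

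Next I would extract the local normal form from transversality. Because $g$ is transversal to $M\times\{0\}$, the preimage $\mathcal S'=g^{-1}(M\times\{0\})$ is an oriented regular substratifold, and by the compatibility of transversal pullbacks with normal bundles and with orientation line bundles recorded in the Induced maps subsection, its normal bundle in $\mathcal S$ is the pullback of the (trivial) normal bundle of $M\times\{0\}$ in $M\times H$, hence the trivial bundle $\mathcal S'\times H$, and the induced orientation agrees with the product orientation. Using the tubular neighborhood structure for stratifolds employed above, I would choose a tube $V\subseteq\mathcal S$ around $\mathcal S'$ together with an isomorphism $V\cong\mathcal S'\times H$ under which $g$ becomes exactly $g'\times\mathrm{id}_H$; in particular $g^{-1}(M\times\{0\})=\mathcal S'\times\{0\}\subseteq V$, so the complement $\mathcal S\setminus V$ is mapped by $g$ into $M\times(H\setminus\{0\})$.

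With this normal form the cobordism $[g]=[g'\times\mathrm{id}_H]$ is the direct analogue of the finite dimensional construction in \cite{K1}. After an isotopy of $M\times H$ in the $H$-direction, applied simultaneously to both cocycles and hence changing neither class, the two representatives agree on the common tube $\mathcal S'\times H$, so that their difference is carried by a cocycle whose image lies in $M\times(H\setminus\{0\})$. Here the infinite dimensionality enters decisively: $H\setminus\{0\}$ is contractible, so, together with the radial structure of the tube, this leftover piece can be coned off, producing a proper oriented regular Hilbert stratifold cobordism from $g$ to $g'\times\mathrm{id}_H$. This is precisely where the argument departs from the finite dimensional one, in which $H\setminus\{0\}$ would instead be a sphere.

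I expect the main obstacle to be analytic rather than formal, namely verifying that properness of the map to $M\times H$ is preserved at every stage of the deformation and in the capped-off cobordism. In the absence of compactness one cannot argue as in finite dimensions, and properness must be checked directly, using that $g$ is a proper Fredholm map and that the deformation is controlled in the $H$-direction. The remaining points, which I expect to be routine given the results already cited, are that the regularity condition (the local product with a finite dimensional stratifold whose $0$-stratum is a single point) is maintained along the cobordism and that the orientations match throughout; both follow from the functoriality of the determinant/orientation line bundle under transversal pullback.
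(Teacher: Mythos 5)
Your opening reduction is correct and is also how the paper implicitly proceeds: since $M\times H$ is its own tubular neighborhood of the zero section with projection $\pi$, one has $i^*=(\pi^*)^{-1}$, and the lemma amounts to the identity $[g]=[g'\times\mathrm{id}_H]$ in $SH^k(M\times H)$. But the cobordism you propose for this identity has a genuine gap, concentrated in the ``cone off the leftover piece'' step. The cocycles of this theory are \emph{proper} maps, and the part of $\mathcal S$ lying over $M\times(H\setminus\{0\})$ is in general non-compact. Contracting it inside $M\times(H\setminus\{0\})$ --- whether to a point or along a contraction of the unit sphere of $H$ --- produces a homotopy whose time-one map has small image, and such a cylinder cannot be proper over $M\times H$ when the piece being contracted is non-compact; so contractibility of $H\setminus\{0\}$ does not by itself yield a null-bordism. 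You flag properness as ``the main obstacle'' to be verified at the end, but it is not a verification: it defeats the coning strategy as stated. (The only version of this that survives is the push-to-infinity trick the paper uses in the Thom isomorphism proof, and invoking it here would first require cutting $\mathcal S$ along the sphere bundle of your tube, a step you also do not supply --- ``the difference is carried by a cocycle over $M\times(H\setminus\{0\})$'' needs an actual decomposition of $\mathcal S$ with boundary, not a formal subtraction of classes.) A smaller gap: transversality gives the normal bundle of $\mathcal S'$ in $\mathcal S$ and a germ of a tube, not a global tube $V\cong\mathcal S'\times H$ on which $g$ is \emph{literally} $g'\times\mathrm{id}_H$; arranging that requires rescaling, which again interacts with properness.

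The paper avoids all of this with a one-step construction. Take $\rho:[0,1]\to[0,1]$ smooth, identically $0$ near $0$ and identically $1$ near $1$, and let $h:M\times H\times[0,1]\to M\times H$ be $h(m,v,t)=(m,\rho(t)v)$. This $h$ is transversal to $g$: where $\rho(t)\neq0$ it is a submersion, and where $\rho(t)=0$ transversality is exactly the hypothesis that $g$ is transversal to the zero section. The pullback of $g$ along $h$ is then a regular singular Hilbert stratifold with boundary; it is automatically proper (the preimage of a compact set $K$ is a closed subset of $K\times g^{-1}(h(K))$), and its two ends are $g$ at $t=1$ (where $h$ is the identity) and $g'\times\mathrm{id}_H$ at $t=0$ (where $h$ factors through the zero section). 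This produces the required cobordism directly, with properness, regularity and orientation inherited from the pullback rather than checked by hand. You should rework your argument along these lines, or else supply the missing cut-and-push-to-infinity construction in full.
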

\begin{proof}
Let $\rho: [0,1] \to [0,1]$ be a smooth function which is $0$ near $0$ and $1$ near $1$. The map $h: M \times H\times [0,1]\to M\times H$ given by $(m,h,t)\mapsto (m,\rho(t) \cdot h)$ is transversal to $g$, since for $\rho (t) \ne 0$ it is a submersion and for $\rho (t)=0$ we use that $g$ is transversal to the zero section. Thus we can consider the pull back of $g$ under $h$ to obtain a cobordism between $g:\mathcal S \to M\times H$ and $\mathcal S'\times H \to M \times H$, which implies the lemma.

\end{proof}
\begin{cor}\label{transversal pull back}
Let $f:M\to N$ be a smooth map between Hilbert manifolds. If $\alpha\in SH^k(N)$ is represented by a geometric cocycle $g:\mathcal S \to N$ which is transversal to $f$ then $f^*(\alpha)$ is represented by the pull back along $f$. 
\end{cor}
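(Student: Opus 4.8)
The goal is to prove Corollary \ref{transversal pull back}, which states that for a smooth map $f : M \to N$ between Hilbert manifolds and a geometric cocycle $g : \mathcal S \to N$ transversal to $f$, the induced class $f^*(\alpha)$ is represented by the pull back of $g$ along $f$.

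\begin{proof}[Proof proposal]
The plan is to reduce the general case to Lemma \ref{l} using the factorization of $f^*$ that was introduced in the definition of induced maps, namely $f^* = i^* p^*$ where $p : M \times N \to N$ is the projection and $i : M \to M \times N$ is the graph embedding $i = (\mathrm{id}, f)$. First I would treat the projection $p$: since $p$ is a submersion, Proposition \ref{iso} (or rather the submersion case of the induced map, which is given by honest pull back) tells us that $p^*(\alpha)$ is represented by the transversal pull back of $g$ under $p$, which is simply $g \times \mathrm{id} : \mathcal S \times M \to N \times M$ suitably reindexed; transversality of $g$ to $p$ is automatic because $p$ is a submersion. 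The resulting cocycle $\tilde g : \tilde{\mathcal S} \to M \times N$ is the fiber product $\mathcal S \times_N (M \times N)$, whose underlying stratifold is $\{(s,m) : g(s) = f(m)\}$ mapping by $(s,m) \mapsto (m, f(m))$.

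Next I would analyze $i^*$, the induced map for the graph embedding $i : M \to M \times N$. The graph embedding is a closed submanifold inclusion, so by the definition of induced maps for such inclusions we may compute $i^*$ via a tubular neighborhood and the inverse isomorphism of Proposition \ref{iso}. The key observation is that the normal bundle of the graph of $f$ in $M \times N$ is isomorphic to the pull back $f^* TN$, and choosing a tubular neighborhood identifies a neighborhood of the graph with the total space of this normal bundle, which fiberwise looks like the Hilbert space $H$. The transversality of $g$ to $f$ is exactly the statement that $\tilde g$ is transversal to the graph, i.e.\ to the zero section of this normal bundle. This is where Lemma \ref{l} applies: after the tubular neighborhood identification, computing $i^*$ on a cocycle transversal to the zero section is given by the honest transversal pull back to the zero section.

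The core computation is then to verify that the zero-section pull back of $\tilde g$ recovers the pull back of $g$ along $f$. Tracing through the fiber product description, the preimage of the graph under $\tilde g$ is $\{(s,m) : g(s) = f(m), \ f(m) = f(m)\}$, which is precisely the pull back $\mathcal S \times_N M = \{(s,m) : g(s) = f(m)\}$ that represents the transversal pull back $g' : \mathcal S' \to M$, with the map to $M$ being the second projection. Combining the two steps gives $f^*(\alpha) = i^* p^*(\alpha) = [g']$, as desired.

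The main obstacle I expect is the careful identification of the normal bundle of the graph with $f^* TN \cong M \times H$ and the verification that, under the tubular neighborhood chart supplied by the uniqueness theorem for tubular neighborhoods, the transversality hypothesis on $g$ with respect to $f$ translates exactly into transversality of $\tilde g$ to the zero section, so that Lemma \ref{l} is genuinely applicable. One must also check that the orientations match: the pull back of the orientation line bundle is naturally isomorphic to the orientation line bundle of the pull back, a fact already recorded in the discussion preceding Proposition \ref{iso}, so the identifications above are orientation preserving and the equality of cohomology classes (not merely of underlying stratifolds) holds.
\end{proof}
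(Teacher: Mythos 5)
Your proposal is correct and follows essentially the same route as the paper: factor $f^*$ through the projection $p$ and the graph embedding $(\mathrm{id},f)$, use that the projection and the inclusion of the tubular neighbourhood are submersions so their induced maps are honest transversal pull backs, observe that transversality of $g$ to $f$ makes the resulting cocycle transversal to the graph, and invoke Lemma \ref{l} after identifying the tubular neighbourhood of the graph with $M\times H$. One small slip to fix: the fiber product $\mathcal S \times_N (M\times N)$ over the projection is $\mathcal S\times M$ mapping by $(s,m)\mapsto (m,g(s))$, not $\{(s,m): g(s)=f(m)\}$ (the latter is the final pull back along $f$); your concluding computation of the preimage of the graph implicitly uses the correct description, so the argument still closes.
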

\begin{proof}
Let $\pi_N:M\times N\to N$ be the projection and $j:U\to M\times N$ be the inclusion of the tubular neighbourhood of the image of $M$ in $M\times N$ under the embedding $(id,f)$. Then $f^*= (id,f)^*\circ j^* \circ \pi_N^*$. $j$ and $\pi_N$  are submersions, hence $ j^* \circ \pi_N^*$ is represented by the pull back $h:\mathcal T \to U$, which is transversal to $(id,f)$ since  $g:\mathcal S \to N$ is transversal to $f$. The fact that $(id,f)^*$ is given by pull back follows from Lemma \ref{l}. Since the composition of pull backs is the pull back of the composition we are done.
\end{proof}
{\bf The proof of Proposition \ref{iso}} 
\begin{lem} \label{projection induces iso}
Let $M$ be Hilbert manifold which is diffeomorphic to the product $M'\times H$, where $M'$ is a smooth, finite dimensional manifold. Then the projections $M\times H \to M$ and $M\times \mathbb R \to M$ induce isomorphisms in cohomology.
\end{lem}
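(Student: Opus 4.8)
The plan is to prove that $\pi^*$ is bijective by producing its inverse geometrically, namely by intersecting a transversal representative with the zero section, and to deduce the needed cobordisms from Lemma \ref{l}. Write $V\in\{\mathbb R,H\}$, let $\pi\colon M\times V\to M$ be the projection and $j\colon M\to M\times V$, $m\mapsto(m,0)$, the zero section. Since $\pi$ is a submersion, $\pi^*$ is computed by the transversal pull back of Corollary \ref{transversal pull back}: for a geometric cocycle $\phi\colon\mathcal T\to M$ one has $\pi^*[\phi]=[\mathcal T\times V\to M\times V]$, where the map is $(t,v)\mapsto(\phi(t),v)$.

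For surjectivity, start from an arbitrary geometric cocycle $g\colon\mathcal S\to M\times V$ and arrange it to be transversal to $M\times\{0\}$ (the crux, addressed below). Then $\mathcal S':=g^{-1}(M\times\{0\})\to M$ is again a geometric cocycle --- regularity and the orientation are inherited through the transversal pull back, using the natural isomorphism of orientation line bundles noted above --- and the scaling homotopy $h(m,v,t)=(m,\rho(t)v)$ of Lemma \ref{l} (which applies verbatim for $V=\mathbb R$) provides a cobordism between $g$ and $\mathcal S'\times V\to M\times V$. Since the latter represents $\pi^*[\mathcal S']$, we get $[g]=\pi^*[\mathcal S']$, so $\pi^*$ is onto. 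For injectivity, suppose $\pi^*[\phi]=0$, so $\mathcal T\times V\to M\times V$ is the boundary of a geometric cocycle with boundary $W\to M\times V$. Making $W$ transversal to $M\times\{0\}$ rel boundary (its boundary $\mathcal T\times V$ is already transversal) and intersecting with the zero section yields a cobordism in $M$ exhibiting $\mathcal T\times\{0\}=\mathcal T$ as null-bordant, whence $[\phi]=0$.

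Thus everything reduces to the transversality statement: every geometric cocycle, and every null-bordism, in $M\times V$ can be made transversal to the zero section $M\times\{0\}$, and this is the step I expect to be the main obstacle. For $V=\mathbb R$ the zero section has codimension one, so it suffices to pick a regular value of the composite $\mathcal S\xrightarrow{g}M\times\mathbb R\to\mathbb R$ stratumwise by Sard's theorem and translate, which preserves properness. For $V=H$ the zero section has infinite codimension, so the Sard--Smale theorem cannot be applied to the projection $\mathrm{pr}_H\circ g$, which is not Fredholm. The way around this is that, $g$ being Fredholm, the failure of transversality at a point $x\in g^{-1}(M\times\{0\})$ is detected only by the finite dimensional cokernel of $dg_x$; hence it can be removed by finite rank perturbations of $g$ in the $H$-direction, performed locally and assembled by a smooth partition of unity (available since stratifolds are paracompact). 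The hypothesis $M\cong M'\times H$ with $M'$ finite dimensional is what lets one organize these perturbations globally while keeping $g$ proper, effectively reducing the problem to the finite dimensional transversality of \cite{K1} in the $M'$-direction together with Smale's theorem for the Fredholm part. Granting this transversality, $\pi^*$ is an isomorphism for both projections, which is the case Proposition \ref{iso} will build on.
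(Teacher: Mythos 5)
Your overall strategy --- make a representative transversal to the zero section, intersect, and use the scaling homotopy of Lemma \ref{l} for the cobordism --- is reasonable in outline, but the step you yourself flag as the main obstacle is a genuine gap, and it is not how the paper proceeds. For $V=H$ you only sketch a hoped-for transversality statement (``finite rank perturbations \dots assembled by a partition of unity'') and then write ``granting this transversality''; nothing is actually proved there. Moreover, the $V=\mathbb R$ branch is also unjustified: the composite $\mathcal S\xrightarrow{g}M\times\mathbb R\to\mathbb R$ is \emph{not} Fredholm, since the kernel of $\mathrm{pr}_{\mathbb R}\circ dg_x$ is $dg_x^{-1}(T_mM\oplus 0)$, which has codimension at most one in $T_x\mathcal S$ and is therefore infinite dimensional on any infinite dimensional stratum. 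Hence Smale's Sard theorem does not apply, and the classical Sard theorem fails for non-Fredholm maps out of infinite dimensional manifolds (there are smooth functions on $H$ whose critical values contain an interval), so ``pick a regular value stratumwise by Sard'' is not available.

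The paper's proof sidesteps both difficulties by using the hypothesis $M\cong M'\times H$ at a different point. Given $[f:\mathcal S\to M'\times H]$, it composes $f$ with the projection onto the $H$-factor \emph{of $M$ itself}, not onto the newly attached factor $V$. Because the complementary factor $M'$ is finite dimensional, $\mathrm{pr}_H\circ f:\mathcal S\to H$ \emph{is} Fredholm, so Smale's theorem yields a regular value, which after a translation may be taken to be $0\in H$; its preimage is a finite dimensional stratifold $\mathcal S'\to M'$, and the scaling cobordism gives $[f]=[\mathcal S'\times H\to M'\times H]$. This normal form --- every class, and by the same argument every null bordism, is a finite dimensional cocycle crossed with $H$ --- combined with stability ($M\cong M\times H$) and homotopy invariance of the finite dimensional theory yields both isomorphisms at once. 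The missing idea in your proposal is precisely this reduction; as written, transversality to the infinite codimension zero section is not established, and the lemma is not proved.
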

\begin{proof}
It is enough to prove the statement for $M=M'\times H$. Given an element $[f:\mathcal S\to M'\times H]\in SH^{k}(M'\times H)$, then the composition with the projection  $\pi  \circ f:\mathcal S \to H$ is Fredholm, so by Smale's Theorem \cite{S}, $\pi  \circ f$ has a regular value, say $0 \in H$ (compose with a translation, if necessary), and its preimage is a (finite dimensional) stratifold $\mathcal S'$. Let $\rho: [0,1] \to [0,1]$ be a smooth function which is $0$ near $0$ and $1$ near $1$. The map $h:M'\times H \times [0,1]\to M'\times H$ given by $(m,r,t)\mapsto \big {(} m,\rho(t) \cdot r \big {)}$ is transversal to $f$, since for $\rho (t) \ne 0$ it is a submersion and for $\rho (t)=0$ we use that $0$ is a regular value of $\pi  \circ f$. Thus we can consider the pull back of $f$ under $h$ to obtain a cobordism between $[f]$ and $[\mathcal S'\times H \to M'\times H]$. Using the fact that Hilbert manifolds are stable ($M$ is diffeomorphic to $M\times H$) it is easy to see that this implies that both induced maps are isomorphisms.
 \end{proof}

\begin{cor}\label{coefficients}
For a smooth, oriented, finite dimensional manifold $M'$ the natural transformation $SH^k(M') \to SH^k(M'\times H)$ given by $[\mathcal S'\to M'] \to [\mathcal S'\times H\to M' \times H] $ is an isomorphism. In particular, the coefficients of our theory (which are the cohomology groups of $H$) are give by $$
SH^k(H) = 0 \,\, for \,\, k \ne 0
$$
and
$$
SH^0(H) \cong \mathbb Z.
$$
\end{cor}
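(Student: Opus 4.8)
The plan is to exhibit pullback along the zero section as an explicit two‑sided inverse of the cross‑product map $\Phi\colon SH^k(M')\to SH^k(M'\times H)$, $[\mathcal S'\to M']\mapsto[\mathcal S'\times H\to M'\times H]$. First I would note that $\Phi$ is pullback along the projection $q\colon M'\times H\to M'$: since $q$ is a submersion every cocycle $\mathcal S'\to M'$ is automatically transversal to it, and by Corollary \ref{transversal pull back} the class $q^*[\mathcal S'\to M']$ is represented by the transversal pullback, which is exactly $\mathcal S'\times H\to M'\times H$. Because the zero section $s\colon M'\hookrightarrow M'\times H$ is a one–sided inverse of $q$, its induced map $s^*$ is the natural candidate for the inverse of $\Phi$, and its value on cocycles transversal to $M'\times\{0\}$ is computed by Lemma \ref{l} to be the transversal pullback over the zero section.

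I would then check the two composites separately. The identity $s^*\circ\Phi=\mathrm{id}$ is immediate: for a finite‑dimensional cocycle $f'\colon\mathcal S'\to M'$ the cross product $f'\times\mathrm{id}_H$ is transversal to $M'\times\{0\}$ with preimage $\mathcal S'\times\{0\}\cong\mathcal S'$, so Lemma \ref{l} gives $s^*\Phi[\mathcal S'\to M']=[\mathcal S'\to M']$. The reverse identity $\Phi\circ s^*=\mathrm{id}$ is the substantive step and simply repeats the argument of Lemma \ref{projection induces iso}: given a representative $f\colon\mathcal S\to M'\times H$, the composite $\pi_H\circ f\colon\mathcal S\to H$ with the projection onto $H$ is Fredholm (of index $-k+\dim M'$), hence by Smale's theorem \cite{S} has a regular value, which after a translation we take to be $0$. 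Then $f$ is transversal to the zero section, $s^*[f]=[\mathcal S'\to M']$ with $\mathcal S'=f^{-1}(M'\times\{0\})$ a finite‑dimensional cocycle of codimension $k$, and the scaling homotopy $h(m,v,t)=(m,\rho(t)v)$ — transversal to $f$ because it is a submersion for $\rho(t)\neq0$ and meets $f$ transversally over $\rho(t)=0$ by the regular‑value choice — provides a cobordism from $f$ to $\Phi[\mathcal S'\to M']=\Phi s^*[f]$. Thus $\Phi$ and $s^*$ are mutually inverse and $\Phi$ is an isomorphism.

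Finally, the coefficient computation is the case $M'=\mathrm{pt}$: here $SH^k(\mathrm{pt})$ is the finite‑dimensional stratifold cohomology of a point, which by \cite{K1} is $H^k(\mathrm{pt};\mathbb Z)$, i.e.\ $\mathbb Z$ for $k=0$ and $0$ otherwise (in negative degrees a closed oriented stratifold bounds its cone, in positive degrees the cocycle is empty). Since $H=\mathrm{pt}\times H$, the isomorphism $\Phi$ yields $SH^k(H)\cong SH^k(\mathrm{pt})$ and hence the stated values. I expect the main difficulty to be the bookkeeping of orientations rather than the topology: one must match the orientation of the Hilbert cocycle $\mathcal S'\times H$ — an orientation of the determinant line of $d(f'\times\mathrm{id}_H)$ in the sense of \cite{KT-o} — with the orientation data of the finite‑dimensional cocycle $\mathcal S'$, using the canonical isomorphism $\det d(f'\times\mathrm{id}_H)\cong\det df'$ and the orientation of $M'$, and to verify that this identification is compatible both with the scaling cobordism and with passage to boundaries. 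One must also confirm that properness, regularity of the cocycles, and finite‑dimensionality of $\mathcal S'$ persist under the transversal pullbacks, and that Lemma \ref{l} and Corollary \ref{transversal pull back} apply verbatim when the base is the finite‑dimensional manifold $M'$.
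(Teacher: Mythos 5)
Your surjectivity argument (the composite $\Phi\circ s^*=\mathrm{id}$ step) is exactly the paper's proof of Lemma \ref{projection induces iso}: Smale's theorem gives a regular value of $\pi_H\circ f$, and the scaling homotopy $h(m,v,t)=(m,\rho(t)v)$ produces a cobordism from $f$ to $\mathcal S'\times H\to M'\times H$. That part is fine, including the orientation caveats. The gap is in the other composite, which you label ``immediate.'' The map $s^*$ you want to use is pullback along the zero section $s\colon M'\to M'\times H$, where $M'$ is \emph{finite dimensional}; this is not one of the induced maps constructed in the paper, whose theory $SH^k$ and whose Lemma \ref{l} and Corollary \ref{transversal pull back} are stated only for maps between Hilbert manifolds, and whose target here would have to be the finite-dimensional stratifold cohomology of \cite{K1}, a different theory. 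So $s^*$ has to be \emph{defined} as ``take a representative transversal to $M'\times\{0\}$ and form the preimage,'' and the entire content of injectivity is that this is independent of the choice of representative within a cobordism class. Verifying $s^*\circ\Phi=\mathrm{id}$ on the specific representatives $\mathcal S'\times H$ does not give injectivity of $\Phi$ until that well-definedness is established: if $\mathcal S'_0\times H$ and $\mathcal S'_1\times H$ are cobordant over $M'\times H$, nothing you have written yet produces a finite-dimensional cobordism between $\mathcal S'_0$ and $\mathcal S'_1$ over $M'$.

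The paper closes exactly this hole by running the same regular-value-plus-scaling argument in its relative form on a null bordism: if $\mathcal S'\times H\to M'\times H$ bounds some $T\to M'\times H$, one makes $\pi_H$ restricted to $T$ have $0$ as a regular value rel the boundary (where the product structure is already given) and scales, producing a bordism of the form $T'\times H$ with $\partial T'=\mathcal S'$; this is precisely the statement that the transversal preimage is well defined on cobordism classes. So your proposal is not wrong in spirit --- it uses the same two tools, Smale transversality and the scaling cobordism --- but it assigns the adjective ``immediate'' to the step that carries the actual burden and needs the relative version of the argument, and it leans on Lemma \ref{l} in a situation where that lemma's hypotheses (Hilbert manifold base) are not met. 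To repair it, replace the appeal to Lemma \ref{l} by an explicit proof that transversal preimage descends to cobordism classes, via the relative regular-value argument applied to bordisms.
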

\begin{proof}
In Lemma \ref{projection induces iso} we showed that this map is surjective. To show it is injective, just note that the same method will show that if $\mathcal S'\times H\to M'\times H$ bounds, then it is also the boundary of a stratifold of the form $T'\times H\to M'\times H$ with $\partial T'=\mathcal S'$.
\end{proof}

We now define a coboundary operator and prove the exactness of the Mayer-Vietoris sequence in the following setting: Let $M$ be a Hilbert manifold and $U,V$ an open cover, together with a diffeomorphism $\psi:N \times \mathbb R \to U\cap V$, such that the image of the zero section is a closed submanifold of $M$, and $N$ is diffeomorphic to a product of a finite dimensional smooth manifold and $H$. In this case, we define a coboundary operator $\overline{d}:SH^k(U\cap V) \to SH^k(M) $ to be the composition
$$ SH^k(U\cap V) \xrightarrow{\psi^*} SH^k(N \times \mathbb R) \xrightarrow{(\pi^*)^{-1}} SH^k(N) \xrightarrow{i_!} SH^{k+1}(M)$$
where the second map is defined using Lemma \ref{projection induces iso}, and the last map is the Gysin map given by composition $[\mathcal S \to N] \mapsto [\mathcal S \to M]$. Here we use the fact that the inclusion of $N$ in $M$ is proper (its image is a closed submanifold) and Fredholm and oriented (its normal bundle is finite dimensional and oriented).
\begin{lem}
The Mayer-Vietoris sequence in this special case is exact.
\end{lem}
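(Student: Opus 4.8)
The plan is to establish exactness of the sequence
$$\cdots \to SH^k(M) \xrightarrow{(j_U^*,\,j_V^*)} SH^k(U)\oplus SH^k(V) \xrightarrow{k_U^* - k_V^*} SH^k(U\cap V) \xrightarrow{\overline d} SH^{k+1}(M) \to \cdots,$$
where $j_U,j_V$ are the inclusions of $U,V$ into $M$ and $k_U,k_V$ the inclusions of $U\cap V$ into $U,V$. Throughout I would use Corollary \ref{transversal pull back} to represent every class by a geometric cocycle transversal to the closed core submanifold $C:=\psi(N\times\{0\})$; for such a representative $g:\mathcal S\to U\cap V$ the chain defining $\overline d$ unwinds, by the proof of Lemma \ref{projection induces iso}, to the explicit geometric formula $\overline d[g] = [\,g^{-1}(C)\hookrightarrow C \hookrightarrow M\,]$, i.e. one intersects with the core and pushes the resulting finite-codimension cocycle into $M$ by the proper oriented Fredholm inclusion $i$. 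Having this concrete description of all three maps reduces every verification to a statement about cutting, gluing, and translating cocycles in the collar $U\cap V\cong N\times\mathbb R$.

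I would treat the ``composite is zero'' inclusions first, since these are where the special product structure does the work. For $j_U^*\circ\overline d=0$ the point is that the cocycle $g^{-1}(C)\to C\subseteq U$ can be translated along the $\mathbb R$-direction of the collar toward the end of $U$ not contained in $V$: using a cut-off function $\rho$ exactly as in Lemma \ref{l} and Lemma \ref{projection induces iso}, the map $g^{-1}(C)\times[0,\infty)\to U$, $(x,s)\mapsto \psi(\mathrm{pr}_N\psi^{-1}g(x),-s)$, is a proper Fredholm bordism over $U$ whose only boundary is $g^{-1}(C)\times\{0\}$; properness survives because a compact subset of $U$ meets the collar in a bounded range of the $\mathbb R$-coordinate. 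Symmetrically one pushes toward the other end to get $j_V^*\circ\overline d=0$. The same translation trick, applied to a representative of a class coming from $U$ (resp. $V$) that is globally defined across $C$, gives $\overline d\circ(k_U^*-k_V^*)=0$, while $(k_U^*-k_V^*)\circ(j_U^*,j_V^*)=0$ is immediate from $j_Uk_U=j_Vk_V$.

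For the converse inclusions I would cut a transversal representative along the preimage of the core and reassemble. Given $\alpha\in SH^{k+1}(M)$ with $j_U^*\alpha=j_V^*\alpha=0$, represent it by $g:\mathcal S\to M$ transversal to $C$; the restrictions $g^{-1}(U)\to U$ and $g^{-1}(V)\to V$ are proper and represent $j_U^*\alpha$ and $j_V^*\alpha$ (open inclusions are automatically transversal, so Corollary \ref{transversal pull back} applies), hence bound proper oriented null-bordisms $W_U\to U$ and $W_V\to V$. Restricting these to $U\cap V$ and gluing them along their common boundary $g^{-1}(U\cap V)$, via the collars of Section 3, produces a closed $k$-cocycle $\gamma$ on $U\cap V$, and a cylinder computation shows $\overline d[\gamma]=[\mathcal S\to M]=\alpha$. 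The remaining two exactness points are handled by the analogous cut-and-glue: a pair with equal restrictions to $U\cap V$ is glued along a bordism of those restrictions into a class on $M$, and a class $\gamma$ with $\overline d[\gamma]=0$ is split, using a null-bordism of $g^{-1}(C)$ in $M$, into the difference of a class on $U$ and a class on $V$.

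The main obstacle is carrying out the cut-and-glue entirely inside the category of oriented, regular singular Hilbert stratifolds rather than in a finite-dimensional model. Three points need care: that cutting $\mathcal S$ along the sub-stratifold $g^{-1}(C)$ (clean because $g\pitchfork C$ and $C$ has finite codimension) yields pieces proper over $U$, respectively $V$, even though they are only proper over $M$ as a whole; that the gluing of the restricted bordisms along $g^{-1}(U\cap V)$ respects the sheaf, the regularity condition, and the collars, which is exactly the gluing construction asserted to generalize from the finite-dimensional setting; and that the orientations match, where one uses that the cylinder induces opposite orientations on its two ends, as recorded in the orientation discussion of Section 3, so that the glued object is coherently oriented and $\overline d$ produces $\alpha$ with the correct sign. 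Verifying $\overline d[\gamma]=\alpha$ on the nose is the one genuinely computational step, and it is where the explicit formula $\overline d[g]=[g^{-1}(C)\to M]$ together with the translation trick pays off.
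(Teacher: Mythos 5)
Your reconstruction is, in outline, exactly the argument the paper appeals to: the paper's own proof of this lemma is a single sentence deferring to the finite-dimensional cut-and-paste proof in \cite{K1}, p.\ 202ff, and what you write out --- identifying $\overline d[g]$ with $[g^{-1}(C)\to M]$ for representatives transversal to the core, killing the composites by a translation bordism in the collar, and proving the converse inclusions by cutting along $g^{-1}(C)$ and regluing null-bordisms --- is the content of that proof transported to the Hilbert setting. Your list of the delicate points (properness of the halves, compatibility of the gluing with the sheaf, the regularity condition and the collars, and the orientation of the glued object) is also where the actual work sits.

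There is, however, one assertion that is not a consequence of the stated hypotheses and on which everything else rests: that ``a compact subset of $U$ meets the collar in a bounded range of the $\mathbb R$-coordinate.'' The hypotheses only give a diffeomorphism $U\cap V\cong N\times\mathbb R$ with the core $C$ closed in $M$; they do not force the half $\psi(N\times(-\infty,0])$ to be closed in $U$ (equivalently, the collar coordinate need not tend to infinity inside $U$), and without that your translation bordism is not proper over $U$. In fact the lemma is false under the literal hypotheses: take $M=S^1\times H$, $U=M$, $V=A\times H$ for an open arc $A$, and $C$ the core of $U\cap V=A\times H$; then $C$ is a closed submanifold, but $\overline d$ sends the generator of $SH^0(U\cap V)$ to the nonzero class $[C\hookrightarrow M]\in SH^1(M)\cong\mathbb Z$, even though that generator is the restriction of a class on $U$, so exactness fails. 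What is needed in addition is that $C$ genuinely separates, i.e.\ that $M\setminus C$ splits into an open $U$-side containing $M\setminus V$ and an open $V$-side containing $M\setminus U$, each with closure adding only $C$; this holds automatically for the Dold-thickened triple $(M';U',V')$, which is the only situation in which the lemma is invoked, and it is exactly what the separating function in the argument of \cite{K1} supplies. You should either add this separation property as a hypothesis or run the argument directly in the thickened model; with it in place your proof goes through.
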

\begin{proof}
The proof of the exactness of the Mayer-Vietoris sequence was worked out for stratifold cohomology in \cite{K1}, page 202ff, and the same arguments apply here. 
\end{proof}

We now prove Proposition \ref{iso}.
\begin{proof} (Proposition \ref{iso})
We start with the case of infinite dimeniosnal fibers. In this case the bundle is trivial. By \cite{EE} Theorem 3E, there exists a sequence of open subsets $Z_n\subseteq M$ ($n\in \mathbb N$) such that the following conditions hold: 1) $Z_n\subseteq Z_{n+1}$ for every $n$. 2) $\cup Z_n =M$. 3) For every $n$, $Z_n$ is diffeomorphic to the product of an $n$-dimensional manifold with the Hilbert space. (They prove it in a more general setting, where $M$ is a Banach manifolds which satisfies certain conditions).

Following Milnor, we look at the telescope $T=\cup_{n=1}^\infty Z_n\times (n,\infty)$ which has the structure of a Hilbert manifold. The projection map to $M$ is a weak homotopy equivalence, hence by Whitehead's theorem together with the fact that Hilbert manifolds have the homotopy type of $CW$ complexes, it is a homotopy equivalence. This implies that $T$ is actually diffeomorphic to  $M$. We decompose $T$ into two open subsets $$U=T \cap \big{(} M \times \cup_{n=0}^\infty (2n,2n+2) \big{)} , \ U=T \cap \big{(} M \times \cup_{n=0}^\infty (2n+1,2n+3) \big{)}.$$
Note that $U$ has the homotopy type (and hence diffeomorphic to) $\cup_{n=0}^\infty  Z_{2n+1}$, $V$ has the homotopy type (and hence diffeomorphic to) $\cup_{n=0}^\infty  Z_{2n+2}$ and $U\cap V$  has the homotopy type (and hence diffeomorphic to) $\cup_{n=0}^\infty  Z_{2n+1}$. Since for every $n$, $Z_n$ is diffeomorphic to the product of the Hilbert space with a finite dimensional manifold, by Lemma \ref{projection induces iso} and additivity, the statement is true for $U,V$ and $U\cap V$. Using the Mayer Vietoris and the five Lemma, the statement is also true for $T$, and hence for $M$. The case where the fibers are finite dimensional follows from the first one by looking at the maps $E\times H \to E \to M$.
\end{proof}

\subsection{A generalized Gysin map}

~

For finite-dimensional closed oriented manifolds $M$ and $N$ of dimension $m$ and $n$, and a map $f: M \to N$ one defines the Gysin map $f_!: H^k(M) \to H^{n-m+k}(N)$ as the conjugation of $f_*$ by the maps given by Poincar\'e duality. If $M$ and $N$ are not closed one can define the Gysin map for proper maps in a similar way using locally finite homology groups. If $M$ and $N$ are not oriented but the map $f$ is oriented one can also define the Gysin map by using twisted locally finite homology groups. 

If $M$ is a smooth closed (as topological space) submanifold in $N$ with oriented normal bundle, then  the Gysin map has a different description as the composition of the Thom isomorphism and the push forward. This description extends to the case of Hilbert manifolds. If $M$ is a Hilbert submanifold of $N$ with $k$-dimensional oriented normal bundle, then one obtains the Gysin map 
$$
i_!: H^l(M;\mathbb Z) \to H^{k+l} (N; \mathbb Z)
$$
by applying the Thom isomorphism and the push forward map. 

This is an important tool. Although simple to define computations are not easy since the Thom class is not constructed but a class which is uniquely characterized  by certain properties and the push forward is constructed using a tubular neighborhood. In our context the Thom class is just the class represented by the zero section and push forward is composition with the inclusion. Thus the Gysin map has a simple description, it is just the composition with the inclusion. This description allows an immediate generalization to proper oriented Fredholm maps $f: M \to N$. 

\begin {defn} For a proper oriented Fredholm map $f: M \to N$ of index $k$ we define the {\bf Gysin map}
$$
f_!: SH^l(M) \to SH^{l-k}(N)
$$
$$
[g: \mathcal S \to M] \mapsto [fg: \mathcal S \to N]
$$
\end{defn}

Here are some properties of the Gysin map. The Gysin map is functorial:

\begin{prop}
If $f:M\to N$ and $g:N \to P$ are proper oriented Fredholm maps, then $g_!\circ f_!=(g\circ f)_!$
\end{prop}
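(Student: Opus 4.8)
The plan is to observe that the Gysin map is by definition nothing but post-composition, so that the identity $g_!\circ f_! = (g\circ f)_!$ will ultimately reduce to associativity of composition of maps; the real work is to verify that post-composition by a proper oriented Fredholm map is a well-defined operation on cobordism classes and that the relevant structures compose coherently.

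First I would check that a composition of proper oriented Fredholm maps is again of the same type. Properness is immediate, since for compact $K \subseteq P$ we have $(g\circ f)^{-1}(K) = f^{-1}(g^{-1}(K))$, which is compact because $g^{-1}(K)$ is compact ($g$ proper) and $f$ is proper. The Fredholm property together with additivity of indices follows from the chain rule $d(g\circ f)_x = dg_{f(x)}\circ df_x$ and the standard fact that a composite of Fredholm operators is Fredholm with $\mathrm{ind}(dg_{f(x)}\circ df_x) = \mathrm{ind}(dg_{f(x)}) + \mathrm{ind}(df_x)$. If $f$ has index $k$ and $g$ has index $j$, then $g\circ f$ has index $k+j$, so both $g_!\circ f_!$ and $(g\circ f)_!$ send $SH^l(M)$ to $SH^{l-k-j}(P)$. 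I would also note that for a cocycle $h:\mathcal S\to M$ the index of $d(f\circ h)_x$ exceeds that of $dh_x$ by the constant $k$, so the stratification of $\mathcal S$ induced by $f\circ h$ is that induced by $h$ with all labels shifted by $k$; hence the regularity condition is preserved and $f\circ h$ is again a geometric cocycle.

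Next I would treat orientations, which is the delicate point. An orientation of a Fredholm map is an orientation of its determinant line bundle in the sense of \cite{KT-o}. For a composition there is a canonical isomorphism $\det(d(g\circ f)) \cong f^*\det(dg)\otimes\det(df)$, and the tensor product of the given orientations on the two factors yields an orientation on the composite; the same construction applies to cobordisms, so $f_!$ descends to a well-defined map on $SH^*$.

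With these preliminaries, functoriality follows by unwinding definitions: for a cocycle $h:\mathcal S\to M$,
$$(g_!\circ f_!)[h] = g_![f\circ h] = [g\circ(f\circ h)] = [(g\circ f)\circ h] = (g\circ f)_![h],$$
the middle equality being associativity of composition of maps. The one point that is not purely formal is that the orientation placed on $g\circ(f\circ h)$ by applying $f_!$ and then $g_!$ must coincide with the orientation placed on $(g\circ f)\circ h$ by $(g\circ f)_!$. This is precisely the coherence (associativity) of the composition of determinant-line orientations, and I expect it to be the main obstacle; it should be read off from the formalism of \cite{KT-o}.
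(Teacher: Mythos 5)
Your proposal is correct and takes the same route as the paper, whose entire proof is the one-line observation that both sides are defined by composition of maps and of orientations, which is associative. You simply spell out the preliminary checks (properness, Fredholm index additivity, and the composition of determinant-line orientations via \cite{KT-o}) that the paper leaves implicit.
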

\begin{proof}
This is clear, since in both cases the maps are defined by composition with the composed orientation, which are associative.
\end{proof}

The following proposition relates the Gysin maps and the induced maps:

\begin{prop}\label{commuting with Gysin}
Suppose that the following is a pull back diagram where the maps into $N$ are transversal and the horizontal maps are proper oriented Fredholm maps
\[
\xymatrix{
Q \ar[r]^{\tilde{f}} \ar[d]^{\tilde{g}} & P \ar[d]^{g}\\
M \ar[r]^f & N.}
\]
Then, $$g^*\circ f_! = \tilde{f}_! \circ \tilde{g}^*.$$
\end{prop}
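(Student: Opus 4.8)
The plan is to reduce the statement to the already-established compatibility between induced maps and pull-backs along transversal cocycles, namely Corollary \ref{transversal pull back}. The idea is that both sides of the identity $g^*\circ f_! = \tilde{f}_!\circ\tilde{g}^*$ are described by explicit geometric operations on cocycles (composition with a map, and transversal pull-back), so the equality should follow by tracing a representing cocycle through the pull-back square and checking that the two resulting cocycles in $P$ agree.

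First I would start with a class $\alpha\in SH^l(M)$ represented by a geometric cocycle $h:\mathcal{S}\to M$. To compute $\tilde{g}^*(\alpha)$ via Corollary \ref{transversal pull back}, I would first arrange that $h$ is transversal to $\tilde{g}$; since any cocycle can be replaced by a cobordant one transversal to a given smooth map (by the transversality techniques underlying the definition of induced maps), this is no loss of generality. Then $\tilde{g}^*(\alpha)$ is represented by the transversal pull-back $\tilde{g}^*\mathcal{S}\to Q$, and applying $\tilde{f}_!$ simply composes with $\tilde{f}$, yielding the cocycle $\tilde{f}\circ(\text{pull-back projection}):\tilde{g}^*\mathcal{S}\to P$. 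On the other side, $f_!(\alpha)$ is represented by $f\circ h:\mathcal{S}\to N$, and then $g^*$ is computed, again via Corollary \ref{transversal pull back}, as the transversal pull-back of $f\circ h$ along $g$.

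The key step is then to identify these two cocycles in $P$. The crucial observation is that, because the square is a pull-back and the maps into $N$ are transversal, the fiber product of $\mathcal{S}$ (mapping to $N$ through $M$) with $P$ over $N$ is canonically diffeomorphic, as a singular Hilbert stratifold, to the fiber product of $\tilde{g}^*\mathcal{S}$ with $P$ over $Q$ — both compute $\mathcal{S}\times_N P$. Concretely, $\{(s,p): f(h(s))=g(p)\}$ is the same space whether one first pulls $\mathcal{S}$ back to $M\times_N P=Q$ and then maps to $P$, or forms $f\circ h$ and pulls back along $g$ directly. I would verify that the stratifold structures, the orientations (using that the pull-back of the orientation line bundle is the orientation line bundle of the pull-back, as noted earlier), and the resulting maps to $P$ coincide under this identification, so that the two cocycles represent the same class in $SH^{l-k}(P)$.

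The main obstacle I expect is the bookkeeping of \emph{transversality and orientations} in this double pull-back, rather than any deep difficulty. One must ensure that the transversality hypotheses in Corollary \ref{transversal pull back} are simultaneously met for both computations — that is, that a single representative $h$ can be chosen transversal to all the relevant maps at once, and that the transversal pull-backs are again geometric cocycles (regular, oriented Hilbert stratifolds). The orientation matching is the subtlest point: one must check that the induced orientation on $\mathcal{S}\times_N P$ coming from pulling back through $Q$ agrees with the one coming from pulling back $f\circ h$ along $g$, which reduces to the naturality and associativity of the orientation conventions for transversal intersections established in \cite{KT-o}. Once these compatibilities are in place, the identity $g^*\circ f_! = \tilde{f}_!\circ\tilde{g}^*$ follows immediately from the canonical identification of the two fiber products.
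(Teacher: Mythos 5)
There is a genuine gap at the very first step of your argument: you assume that any geometric cocycle $h:\mathcal S\to M$ representing $\alpha$ can be replaced by a cobordant one that is transversal to $\tilde g$ (and such that $f\circ h$ is transversal to $g$), ``by the transversality techniques underlying the definition of induced maps.'' No such general transversality result is available in this setting, and the paper is explicitly structured to avoid needing one. The cocycles are infinite-dimensional singular Hilbert stratifolds and the maps $g$, $\tilde g$ are arbitrary smooth maps between Hilbert manifolds; Smale's transversality theorem, the only perturbation tool used in the paper, applies to maps from \emph{finite-dimensional} manifolds into a Hilbert manifold (see Proposition \ref{transversality}, which is stated only for finite-dimensional $p$-stratifolds). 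This is precisely why induced maps are \emph{not} defined by transversal pull-back in the first place, but rather by factoring through $M\times N$, restricting to a tubular neighborhood of the graph, and inverting $\pi^*$ via Proposition \ref{iso}. Corollary \ref{transversal pull back} only tells you what $g^*$ does \emph{if} a transversal representative happens to exist; it does not produce one. Once a transversal representative is granted, your identification of the two fiber products $\mathcal S\times_N P$ is correct and is indeed the heart of the matter, but the granting is the whole difficulty.

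The paper's actual proof circumvents this by a reduction scheme: it first verifies the identity in two cases where transversal representatives are automatic --- when $g$ is a submersion (so pull-backs compose), and when the vertical maps are zero-section inclusions $id\times 0$ into a product with a vector space, where every class in $SH^k(Q\times V)$ is represented by a cocycle of the form $S\times V\to Q\times V$. It then shows the identity is stable under concatenating diagrams, and under replacing $g$ by a homotopy $h_t$ that is a submersion for all $t$ except at one end. Finally, it factors the general $g$ (after stabilizing by $P\times H\to P$) through the tubular neighborhood of the graph $(id,g)(P)\subseteq P\times N$ and builds such a homotopy explicitly. To repair your argument you would either have to reproduce this reduction, or prove an infinite-dimensional transversality theorem for singular Hilbert stratifolds against arbitrary smooth maps, which is not established in the paper.
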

\begin{proof}
We prove it in a few steps.
\begin{enumerate}
\item There are two cases where the statement is clear. The first is when $g$ is a submersion, then it follows from the fact that the composition of pull backs is the pull back of the composition. The second is for diagrams of the form 
\[
\xymatrix{
Q \ar[r]^{\tilde{f}} \ar[d]_{id\times 0} & P \ar[d]^{id\times 0}\\
Q\times V \ar[r]^{\tilde{f}\times id} & P\times V,}
\]
where $V$ is a vector space, simply because every element in $SH^k(Q\times V)$ is represented by a map $S\times V \to Q\times V$. 
\item If the statement holds true for two diagrams, where the bottom map of the first is the top map of the second one then it also holds true for the diagram obtained by concatenating the two.

\item Assume that there is a technical homotopy $h_t$ between $g$ and a submersion $g'$, such that for each $t\in\mathbb R$ $h_t$ is either $g$ or a submersion (in particular transversal to $f$). Let $Q_0=Q$, $Q_1$ the pullback of the diagram which involves $g'$ and $Q_h$ the pullback of the diagram which involves the homotopy, with the inclusions $i_0:Q_0\to Q_h$ and  $i_1:Q_1\to Q_h$:
\[
\xymatrix{
Q_0 \ar[r]^{\tilde{f}} \ar[d]^{\tilde{g}} & P \ar[d]^{g}&Q_1 \ar[r]^{\tilde{f'}} \ar[d]^{\tilde{g'}} & P \ar[d]^{g'}&Q_h \ar[r]^{\tilde{f}_h} \ar[d]^{\tilde{h}} & P\times \mathbb R \ar[d]^{h}\\
M \ar[r]^f & N&M \ar[r]^f & N&M \ar[r]^f & N .}
\]
Since $g\sim g'$ we know that $g'^*=g^*$. The map $g'$ is a submersion, hence $\tilde{f'}_! \circ \tilde{g'}^*=g'^*\circ f_! = g^*\circ f_! $, hence in order to show that $g^*\circ f_! = \tilde{f}_! \circ \tilde{g}^*$  it is enough to show that $\tilde{f}_! \circ \tilde{g}^*=\tilde{f'}_! \circ \tilde{g'}^*$. Note that both $\tilde{g}$ and $\tilde{g'}$ factor through $Q_h$, hence it is enough to show that $\tilde{f}_! \circ i_0^*=\tilde{f'}_! \circ i_1^*$. 
Look at the corresponding diagrams
\[
\xymatrix{
Q_0 \ar[r]^{\tilde{f}} \ar[d]^{i_0} & P \ar[d]^{id\times 0}&Q_1 \ar[r]^{\tilde{f'}} \ar[d]^{i_1} & P \ar[d]^{id\times 1} \\
Q_h \ar[r]^{\tilde{f}_h} & P\times \mathbb R &Q_h \ar[r]^{\tilde{f}_h}  & P\times \mathbb R .}
\]
The vertical maps factor through the tubular neighborhoods, giving us two diagrams such that by (1) the statement holds true, hence by (2) the statement holds true for those diagrams. Since $id\times 0 \sim id \times 1$, their induced maps are equal. Altogether, we get that $\tilde{f}_! \circ i_0^*=\tilde{f'}_! \circ i_1^*$, and the statement holds true for $g$.

\item Consider the following diagram
\[
\xymatrix{
Q \times H \ar[r]^{\tilde{f}\times id} \ar[d]^{\tilde{g}\circ \pi} & P\times H \ar[d]^{g\circ \pi}\\
M \ar[r]^f & N,}
\]
where $H$ is the Hilbert space and $\pi:P\times H \to P$ is the projection. Let $U\subseteq P\times N$ be a tubular neighborhood of $P\xrightarrow{id\times g} P\times N$, and $U\xrightarrow{l} N$ the restriction of the projection. There is a diffeomorphism  $U\cong P \times H$, under which the map $P\to U$ corresponds to the zero section $P\xrightarrow{id\times\{0\}} P\times H$. Let $\rho:\mathbb R \to \mathbb R $ be a smooth, non decreasing map, with $\rho(x)=0$ for all $x\leq 0.1$ and $\rho(x)=1$ for all $x\geq0.9$. Define a homotopy $h:P\times H \times \mathbb R\to N$ by
$$h_t(p,v)=h(p,v,t)=l(p,\rho(t)\cdot v).$$
Note that $h_t$ fulfills the conditions in (3), hence the statement holds true for this diagram.
\item Concatenating the diagram in (1) with the one in (4) we obtain the original diagram. Then, by (2), the statement holds true in general.

\end{enumerate}



\end{proof}

\subsection{Multiplicative structure}

~

Let $M$ and $N$ be two Hilbert manifolds, we define an exterior
product:
\[
SH^{k}(M)\otimes SH^{l}(N)\to SH^{k+l}(M\times N)\]
by setting $[S\to M]\otimes[T\to N]\to(-1)^{kl}[S\times T\to M\times N]$.
We set the orientation of the product using the product orientation (see \cite{KT-o}). With our convention for the product orientation one checks as in \cite{K1},
page 135, that the product is graded commutative. Note, that this agrees with the sign of the product in the finite dimensional case when $\dim M$ is even. If $\dim M$ is odd we cannot use $TM$ as $\eta (\xi)$, since we simplified the signs
occurring there by assuming that $\eta (\xi)$ in the definition of the determinant
line bundle is always even. Nevertheless, in order to prove that the natural transformation is multiplicative it is enough to prove it for the case where $M$ is even dimensional, using the fact that both products are natural and that every odd dimensional manifold $M$ is homotopy equivalent to the even dimensional manifold $M \times \mathbb{R}$.

The cup product is defined in the standard way using pull back along
the diagonal map. Clearly, with this definition, the induced map is
a ring homomorphism. From Corollary \ref{transversal pull back} one can prove the following
\begin{prop}\label{transversal cup product}
If two classes are represented by transversal cocycles then their cup product is represented by their transversal intersection.
\end{prop}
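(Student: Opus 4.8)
The plan is to unwind the definition of the cup product as the diagonal pullback of the exterior product, and then to invoke Corollary \ref{transversal pull back} to replace that pullback by a genuine transversal intersection. Write the two classes as $\alpha = [g_1: \mathcal S_1 \to M]$ and $\beta = [g_2: \mathcal S_2 \to M]$, and assume $g_1$ and $g_2$ are mutually transversal. By definition the exterior product $\alpha \times \beta \in SH^{k+l}(M \times M)$ is represented, up to the sign $(-1)^{kl}$, by $g_1 \times g_2: \mathcal S_1 \times \mathcal S_2 \to M \times M$, and the cup product is $\alpha \cup \beta = \Delta^*(\alpha \times \beta)$, where $\Delta: M \to M \times M$ is the diagonal.

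First I would check that the cocycle $g_1 \times g_2$ is transversal to $\Delta$; this is the only place where the hypothesis on $g_1, g_2$ enters. At a point $(x,y)$ with $g_1(x) = g_2(y) = m$, the image of the differential of $g_1 \times g_2$ is $dg_1(T_x \mathcal S_1) \times dg_2(T_y \mathcal S_2)$, while $T_{(m,m)}\Delta(M)$ is the diagonal $\{(v,v)\}$ inside $T_m M \times T_m M$. A short linear-algebra computation shows that these two subspaces span $T_m M \times T_m M$ precisely when $dg_1(T_x \mathcal S_1) + dg_2(T_y \mathcal S_2) = T_m M$, i.e.\ exactly when $g_1$ and $g_2$ are transversal. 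Since the $g_i$ are Fredholm and transversality is taken in the Fredholm sense, the pullback is again a Hilbert stratifold by the facts recalled at the beginning of the subsection on induced maps, and the additivity of the index on products shows $g_1 \times g_2$ is a legitimate cocycle of degree $k+l$.

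Next I would identify the pullback. Because $g_1 \times g_2$ is transversal to $\Delta$, Corollary \ref{transversal pull back} asserts that $\Delta^*(\alpha \times \beta)$ is represented by the pullback of $g_1 \times g_2$ along $\Delta$. That pullback is the fibre product $M \times_{M \times M} (\mathcal S_1 \times \mathcal S_2)$, which is canonically identified with $\{(x,y) \in \mathcal S_1 \times \mathcal S_2 : g_1(x) = g_2(y)\}$, its map to $M$ sending $(x,y)$ to the common value $g_1(x) = g_2(y)$. This is by definition the transversal intersection of the two cocycles, completing the identification of the underlying stratifold.

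The hard part will not be the transversality step, which is routine, but the orientation bookkeeping: one must verify that the orientation Corollary \ref{transversal pull back} places on the pullback agrees with the intersection orientation on $\mathcal S_1 \times_M \mathcal S_2$, and that the exterior-product sign $(-1)^{kl}$ is absorbed correctly. I would handle this by tracing the orientations of the relevant determinant line bundles through the isomorphism $T_{(m,m)}\Delta(M) \cong T_m M$ and comparing against the product-orientation conventions of \cite{KT-o}, reducing the sign check (as in the proof that the exterior product is graded commutative) to the even-dimensional model case where the conventions are already pinned down. In the non-oriented setting this final paragraph is vacuous, and the argument is complete once the underlying stratifold has been identified.
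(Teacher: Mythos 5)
Your argument follows exactly the route the paper intends: the paper offers no written proof beyond the remark that the proposition follows from Corollary \ref{transversal pull back}, and your proposal correctly fills in precisely those details (diagonal pullback of the exterior product, the linear-algebra equivalence of transversality to $\Delta$ with mutual transversality of the two cocycles, and the identification of the fibre product with the transversal intersection). The orientation bookkeeping you flag is handled the same way the paper handles signs elsewhere, via the conventions of \cite{KT-o}, so nothing is missing.
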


We also have the relation with the Gysin map:

\begin{prop}
For a proper oriented Fredholm map $f: M \to N$ the Gysin map is a map of $SH^*(N)$ modules.
\end{prop}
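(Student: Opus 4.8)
The module structures are the relevant ones: $SH^*(M)$ is a module over $SH^*(N)$ via $\beta\cdot\alpha=f^*\beta\cup\alpha$, while $SH^*(N)$ acts on itself by cup product. Thus the assertion that $f_!$ is a homomorphism of $SH^*(N)$-modules is precisely the projection formula
\[
f_!\bigl(f^*\beta\cup\alpha\bigr)=\beta\cup f_!(\alpha),\qquad \alpha\in SH^*(M),\ \beta\in SH^*(N).
\]
The plan is to evaluate both sides on transversal geometric representatives and to observe that the two resulting cocycles are literally the same fiber product over $N$, equipped with the same structure map. Since the Gysin map is just composition with $f$ and both cup products and the pullback $f^*$ have the geometric descriptions established in Proposition \ref{transversal cup product} and Corollary \ref{transversal pull back}, the underlying-space comparison is essentially bookkeeping of fiber products.

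First I would represent $\alpha$ by $g:\mathcal S\to M$ and $\beta$ by $h:\mathcal T\to N$. Using homotopy invariance of the functor I homotope $h$ to be transversal to $f$, so that by Corollary \ref{transversal pull back} the class $f^*\beta$ is represented by the pullback $\bar h:M\times_N\mathcal T\to M$; a further transversality arrangement makes $\bar h$ transversal to $g$. A pointwise linear-algebra computation on the top strata shows that transversality of $h$ to $f$ together with transversality of $\bar h$ to $g$ forces $fg:\mathcal S\to N$ to be transversal to $h$, which is exactly the hypothesis needed to compute the right-hand side. With these arrangements, Proposition \ref{transversal cup product} represents $f^*\beta\cup\alpha$ by the transversal intersection $(M\times_N\mathcal T)\times_M\mathcal S$, and applying $f_!$ composes its structure map with $f$. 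On the other side, $f_!\alpha=[fg:\mathcal S\to N]$ and $\beta\cup f_!\alpha$ is represented by $\mathcal S\times_N\mathcal T$. Both spaces are canonically isomorphic, as singular Hilbert stratifolds, to
\[
\{(s,t)\in\mathcal S\times\mathcal T:\ fg(s)=h(t)\},
\]
with the common structure map $(s,t)\mapsto fg(s)=h(t)\in N$; the identification holds because the $M$-coordinate $x=g(s)$ in the iterated pullback is determined by $s$. This is the heart of the argument and is purely formal once transversality is in place.

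The main obstacle is orientations. Under the canonical identification above I must check that the orientation induced on the fiber product from the normal data on the left (pull back of the orientation of $h$, intersected with the orientation of $g$, then composed with the orientation of $f$) agrees with the one produced on the right (orientation of $g$ composed with $f$, then intersected with the orientation of $h$). In the language of \cite{KT-o} this is a compatibility statement for orientations of determinant line bundles under pullback, transversal intersection, and composition, together with the index shift of $f_!$; the identifications of normal bundles that enter the two constructions are the same, so the comparison reduces to the associativity and naturality of the orientation conventions. I expect any residual discrepancy to be exactly the sign $(-1)^{kl}$ built into the exterior-product convention used to define the cup product, with graded commutativity accounting for the order of the factors $\beta$ and $f_!\alpha$. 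Carrying out this sign bookkeeping carefully is where the real work lies; everything else is the formal fiber-product identification described above.
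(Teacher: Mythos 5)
The underlying-space identification you describe (both sides of the projection formula are represented by the fiber product $\mathcal S\times_N\mathcal T$) is the right geometric picture, but your argument rests on transversality arrangements that are not available in this setting. You need to homotope $h:\mathcal T\to N$, a map from an \emph{infinite-dimensional} singular Hilbert stratifold, to be transversal to the Fredholm map $f$, and then further arrange the pullback $\bar h$ to be transversal to $g:\mathcal S\to M$. The only transversality theorem established in the paper (Proposition \ref{transversality}, built on Smale's infinite-dimensional Sard theorem) applies to maps from \emph{finite-dimensional} regular $p$-stratifolds into a Hilbert manifold; there is no statement permitting a perturbation of one infinite-dimensional geometric cocycle to make it transversal to another, or to a Fredholm map between Hilbert manifolds. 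Both Corollary \ref{transversal pull back} and Proposition \ref{transversal cup product} are conditional (``\emph{if} the representatives are transversal, \emph{then} \dots''), and the whole apparatus of defining induced maps via tubular neighbourhoods and the base-change formula exists precisely because such transversal representatives cannot be produced in general. On top of this, the orientation comparison --- which you correctly identify as the real work --- is not carried out, only conjectured to reduce to the sign conventions.

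The paper's own proof avoids both issues entirely: it applies Proposition \ref{commuting with Gysin} to the pullback square formed by $(id,f):M\to M\times N$, $(f,id):M\times N\to N\times N$, the diagonal $N\to N\times N$, and $f:M\to N$. The relevant transversality there holds for formal reasons, and the projection formula $f_!(f^*\beta\cup\alpha)=\beta\cup f_!(\alpha)$ falls out of the definition of the cup product via the diagonal together with the evident compatibility of the Gysin map with exterior products; the orientation bookkeeping was already absorbed into the proof of that proposition. To salvage your approach you would either need to prove a transversality theorem for pairs of infinite-dimensional cocycles (not done in the paper, and delicate), or reroute the argument through Proposition \ref{commuting with Gysin} as the paper does.
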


\begin{proof}
This follows from Proposition \ref{commuting with Gysin} when looking at the following diagram
\[
\xymatrix{
M \ar[d]^{(id,f)} \ar[r]^{f} & N \ar[d]^{(id, id)}\\
M\times N \ar[r]^{(f,id)} & N \times N.}
\]
\end{proof}

\subsection{The cohomology axioms}

~

By a cohomology theory $h$ on the category of Hilbert manifolds and smooth maps we mean a sequence of contravariant homotopy functors $h^k(M)$ together with a natural exact Mayer-Vietoris sequence for open subsets. 

We now define the coboundary operator. This is done by a trick following Dold \cite{Do}. For open subsets $U$ and $V$ of $M$ one replaces the triple $(M;U,V)$ by its thickening $(M';U',V')$. Namely, we replace:\\

\noindent{$M$ by $M':= (U \times  (-\infty,-1)) \cup (U \cap V \times (-\infty,\infty)) \cup (V \times (1 ,\infty))$, }

\noindent{$U$ by $U':= (U \times  (-\infty,-1)) \cup (U \cap V \times (-\infty,\infty))$ and}

\noindent{$V$ by $ V':= (U \cap V \times (-\infty,\infty)) \cup (V \times (1 ,\infty))$. }\\

\noindent Then $U' \cap V' = (U \cap V) \times (-\infty, \infty)$. The advantage is that there is a separating submanifold, namely $(U \cap V) \times \{0\}$. Then the {\bf coboundary operator} is defined as 
$$
d:=    SH^k((U \cap V) \times \{0\})  {\overset{i_!}{\longrightarrow}} SH^{k+1} (U'\cup V') {\overset{(\pi^*)^{-1}}{\longrightarrow}} SH^{k+1}(U \cup V),
$$
where $\pi$ is the projection and $i_!$ is the Gysin map given by composition.  The fact that $\pi^*$ is an isomorphism follows from the fact that $\pi$ is a homotopy equivalence.

\begin{thm}
The Mayer-Vietoris sequence is exact
$$\cdots \to SH^{k-1}(U\cap V) \xrightarrow{d} SH^k(U\cup V) \to SH^k(U)\oplus SH^k(V) \to SH^k(U\cap V)  \to \cdots$$
\end{thm}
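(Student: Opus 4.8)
The plan is to deduce the general statement from the special case already handled, using the thickening $(M';U',V')$ to replace $(U\cup V;U,V)$ by a diagram that carries a separating submanifold. First I would record that the projections
$$M'=U'\cup V'\to U\cup V,\quad U'\to U,\quad V'\to V,\quad U'\cap V'=(U\cap V)\times\mathbb{R}\to U\cap V$$
are homotopy equivalences commuting with all the inclusions. Since $SH^*$ is a homotopy functor, they induce isomorphisms and assemble into a commutative ladder between the sequence of $(M';U',V')$ and the sequence in the statement; as $d$ is defined precisely through these projections and the Gysin map of the inclusion $i$ of $(U\cap V)\times\{0\}$, the ladder is compatible with the coboundary as well. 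Hence it suffices to prove exactness of the thickened sequence.

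Next I would observe that the thickened diagram is an instance of the earlier special situation, now with base $U\cap V$: the intersection $U'\cap V'=(U\cap V)\times\mathbb{R}$ contains the separating submanifold $(U\cap V)\times\{0\}$, which is closed in $M'$ and has trivial oriented normal bundle, so $i$ is a proper oriented Fredholm map of index $-1$ and $i_!$ shifts degree by one. The one place where the special case Lemma required the base to be a product of a finite dimensional manifold with $H$ was to invert the bundle projection via Lemma \ref{projection induces iso}; that hypothesis is now unnecessary because Proposition \ref{iso} supplies the isomorphism $(\pi^*)^{-1}$ over the arbitrary base $U\cap V$, so $d$ is well defined and the geometric exactness argument applies unchanged.

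That argument, taken from \cite{K1}, is a cut-and-paste description of exactness at the three positions. Vanishing of consecutive composites is immediate. For exactness at $SH^k(U)\oplus SH^k(V)$ one is given cocycles over $U$ and $V$ whose restrictions to $U\cap V$ are cobordant, and one glues them along such a cobordism, using the collar and the gluing of singular Hilbert stratifolds, to produce a cocycle over $U\cup V$ with the prescribed restrictions. For exactness at $SH^k(U\cup V)$ and at $SH^k(U\cap V)$ one makes a representing cocycle transversal to the separating submanifold and cuts along it: the preimage represents the image under $d$, while the two complementary pieces exhibit the needed classes over $U'$ and $V'$.

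The main obstacle is the verification that these operations survive in the infinite dimensional setting. Transversality to the separating hypersurface is not available from Sard's theorem but exactly from Smale's theorem for Fredholm maps, which is where the Fredholm hypothesis on the cocycles is essential; and one must check that the intersection with, and the two complements of, the separating hypersurface are again proper, regular, oriented singular Hilbert stratifolds with the correctly induced collars and orientations, and that the connecting map produced geometrically coincides with the analytically defined $d$. Given the special case and Proposition \ref{iso}, none of this differs in substance from the finite dimensional argument of \cite{K1}; the work is to transcribe that proof with Fredholm transversality replacing ordinary transversality.
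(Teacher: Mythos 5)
Your proposal follows the paper's own route exactly: reduce to the thickened triple $(M';U',V')$ via the homotopy equivalences given by the projections, identify the thickened situation with the special case already proved (the lemma with a separating submanifold in $U'\cap V'=(U\cap V)\times\mathbb{R}$), and check that the two coboundaries coincide. The paper's proof is precisely this reduction stated in two sentences; your extra observation that Proposition \ref{iso} removes the hypothesis that the base be a product of a finite dimensional manifold with $H$ is a detail the paper leaves implicit, but it does not change the approach.
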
 
\begin{proof}
This follows from the fact that for the thickened triple the coboundary coincides with the coboundary we had before, and we already know that in this case the Mayer Vietoris sequence is exact.
\end{proof}

We will show that this cohomology theory is isomorphic to singular cohomology by constructing a natural transformation and applying the comparison theorem. Besides the cohomology axioms we need to know the coefficients of the theory, which in the case of a cohomology theory defined only for Hilbert manifolds means to know $SH^*(H)$, the cohomology of the Hilbert space, which was done in Corollary \ref{coefficients}. This completes the proof of  Theorem \ref{thm1}, showing that $SH^k(M)$ is a cohomology theory with Gysin maps. The construction and proof for $SH^k(M;\mathbb Z/2)$ is analogous.

\section{Relation to singular (co)homology}

In this section we construct a natural isomorphism from singular cohomology to stratifold cohomology which commutes with all our structures. Before we do this, we introduce relative cohomology groups which are useful, for example, to formulate the Thom isomorphism Theorem.  We also relate stratifold cohomology to singular homology via a cap product which is used to define the Kronecker product. 

\subsection{Relative cohomology groups and the Thom isomorphism} 

~

In the earlier sections we have defined an absolute cohomology theory, rather than cohomology for pairs. We find this more natural since in the world of manifolds, a priory, only pairs of a manifold and a submanifold make sense. Nevertheless, in certain situations relative groups are useful, for example if we consider the Thom isomorphism. Thus we define relative cohomology groups, but only for a Hilbert manifold and a closed (as topological subspace) submanifold. This was done by Dold for finite dimensional manifolds and singular cobordism \cite{Do}. The same constructions and arguments work in infinite dimensions and singular Hilbert stratifolds. Thus we can be rather  short and refer to Dold for details.

Let $A$ be a closed (as a topological subspace) smooth submanifold of a Hilbert manifold $M$, of finite or infinite dimension. We define the {\bf relative cohomology groups} $SH^k(M,A)$ as the cobordism classes of geometric cocycles $f: \mathcal S \to M \setminus A$, such that $f$  is proper when considered as map to $M$ . 

Now we construct the long exact sequence of the pair $(M,A)$. The restriction map
$$
i^*:SH^k(M,A) \to SH^k(M)
$$
is given by considering for $f: \mathcal S \to M \setminus A$ the map $f$ as a map to $M$. The coboundary operator is defined using a tubular neighborhood $U$ of $A$ in $M$. Denote by $SE$ the sphere bundle with $\pi$ its projection to $A$ and $i$ its inclusion to $M$, then we define $\delta=i_! \circ \pi^*.$ 
This is independent of the choices and gives a well defined homomorphism 
$$
\delta: SH^k(A) \to SH^{k+1} (M,A).
$$
Following the arguments of Dold, one proves that one obtains a {\bf long exact sequence of the pair $(M,A)$}. 

The definition of induced maps is similar to the one in the absolute case, but one has to be a bit more careful.
We first define induced maps for submersions $f: M \to N$ mapping $A \subseteq M$ to $B \subseteq N$. Here $f^*: SH^k(N,B) \to SH^k(M,A)$ is given by pull back. In the special case where $f=p: E \to M$ is a smooth vector bundle, one can look at the sequences of the pairs $(M,A)$ and $(E,E|_A)$, then by the 5-lemma we deduce that the induced map $p^*: SH^r(M,A) \to SH^r(E, E|_A)$ is an isomorphism. Using this we define induced maps for arbitrary smooth maps $f: (M,A) \to (N,B)$ as in the absolute case. To do this we consider a cohomology class represented by a geometric cocycle $g: \mathcal S \to N$. Since $g$ is proper and $N$ is metrizable, $g$ is a closed map \cite{Pal}. Thus there is an open neighborhood $V$ of $B$ which is disjoint from the image of  $g$. Using this one can choose an open tubular neighborhood $\nu \to M \times N$  of $(id,f)(M)$ in $M \times N$, with projection $p$, which is small enough such that the image of $p^{-1}(A)$ under the projection map $\pi:M\times N \to N$ is disjoint from the image of $g$. Therefore, the pull back of $g$ represents a class in $(\nu, p^{-1}(A))$ and we can apply ${p^*}^{-1}$ to get  a class in $SH^k(M,A)$. By standard arguments this is well defined and independent on the choice of the neighborhood $V$. Summarizing we have constructed an {\bf induced map for a map of pairs $f: (M,A) \to (N,B)$}, where $A$ and $B$ are closed (as topological spaces) submanifolds:
$$
f^*: SH^k(N,B) \to SH^k(M,A).
$$

Given induced maps one can define the cup product. If $f: \mathcal S \to M$ represents a class in $SH^k(M,A)$ and $f': \mathcal S' \to M'$ represents a class in $SH^r(M')$, then $f \times f': \mathcal S \times \mathcal S' \to M \times M'$ represents a class in $SH^{k+r}(M \times M',A \times M')$. This induces a product
$$
\times : SH^k(M,A) \otimes SH^r(M') \to SH^{k+r}(M \times M',A \times M').
$$
If  $M = M'$ the composition with the map induced by the diagonal  gives the {\bf relative cup product} 
$$
\cup : SH^k(M, A) \otimes SH^r(M) \to SH^{k+r}(M,A)
$$

Now we consider the Thom isomorphism. Let $p: E \to M$ be a finite dimensional oriented smooth vector bundle equipped with a Riemannian metric. We denote the disc bundle by $DE$ and the sphere bundle by $SE$. We observe that the pair of topological spaces $(DE,SE)$ is homotopy equivalent to the pair $(E,SE)$. Thus we can consider the Thom class of $(E,SE)$ instead of $(DE,SE)$. The Thom class in singular cohomology is not explicitly constructed, one proves that there is a unique class compatible with the local orientation of $E$. In stratifold cohomology the situation is much easier. The $0$-section is a proper oriented Fredholm map and so it represents a cohomology class 
$$
\theta(E) \in SH^k(E,SE),
$$
where $k$ is the dimension of $E$. This is our definition of the {\bf Thom class}. Restricting it to open balls $B$ around a point $x \in M$ one obtains Thom classes of $B \times \mathbb R^n$, which are determined by the local orientation of $E$ at $x$. Thus our Thom class agrees with the Thom class mentioned above. The cup product with the Thom class gives an isomorphism $SH^r(M)  \to SH^{r+k}(E, SE)$. With our definition of the Thom class this map has a very simple explicit interpretation, it maps a geometric cocycle $f: \mathcal S \to M$ to the composition with the $0$-section. And with this interpretation of the cup product the proof of the Thom isomorphism Theorem is an observation:

\begin{thm} (Thom isomorphism) Let $p:E \to M$ be a $k$-dimensional oriented vector bundle. Then the cup product with the Thom class
$$
SH^n(M) \to SH^{n+k}(E,SE)
$$
is equal to the composition
$$
[f: \mathcal S \to M] \mapsto [sf: \mathcal S \to E],
$$
where $s$ is the $0$-section, and it is an isomorphism.

\end {thm}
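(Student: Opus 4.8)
The plan is to first identify the map in the statement geometrically and then to exhibit an explicit inverse for it. Write $s\colon M\to E$ for the zero section and $p\colon E\to M$ for the projection. The Thom class is $\theta(E)=[s\colon M\to E]\in SH^k(E,SE)$, and cup product with $\theta(E)$ is the map $\Phi\colon\alpha\mapsto\theta(E)\cup p^*\alpha$, where $p^*\colon SH^n(M)\to SH^n(E)$ is the isomorphism of Proposition \ref{iso}. I will first show that $\Phi$ equals $[f]\mapsto[sf]$, and then that $\Phi$ is bijective by constructing its inverse via fibre integration over the disc bundle.

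For the identification, let $\alpha=[f\colon\mathcal S\to M]$. Since $p$ is a submersion, Corollary \ref{transversal pull back} represents $p^*\alpha$ by the pullback $f^*E\to E$. This pullback is transversal to the zero section $s\colon M\to E$: along the locus where the two images meet, the differential of $f^*E\to E$ already surjects onto the vertical (fibre) directions, while $s$ supplies the horizontal ones. The transversal intersection is therefore $\mathcal S$ itself, mapped into $E$ by $sf$. By the transversality description of the cup product (Proposition \ref{transversal cup product}), which holds verbatim for the relative cup product since both $s$ and $sf$ have image in the zero section, hence in $E\setminus SE$, and are proper into $E$, we obtain $\theta(E)\cup p^*\alpha=[sf]$; with the product orientation convention of \cite{KT-o} the intersection carries the orientation of $sf$. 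This is the displayed identity.

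For the isomorphism I construct an inverse $\Psi$. Using the homotopy equivalence of pairs $(DE,SE)\simeq(E,SE)$ already observed above, represent a class in $SH^{n+k}(E,SE)$ by a cocycle $g\colon\mathcal T\to DE\setminus SE$ which is proper as a map to $DE$, and set $\Psi[g]:=[\bar p\,g\colon\mathcal T\to M]$, where $\bar p\colon DE\to M$ is the projection. Since $\bar p$ is a proper oriented submersion of index $k$ and $g$ is proper into $DE$, the composite $\bar p\,g$ is a proper oriented Fredholm map of index $-n$, i.e.\ a geometric $n$-cocycle, and composing a relative cobordism with $\bar p$ shows that $\Psi$ is well defined. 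Then $\Psi\Phi[f]=[\bar p\,s\,f]=[f]$ because $\bar p\,s=\mathrm{id}_M$. For the other composite $\Phi\Psi[g]=[s\bar p\,g]$, and the radial contraction $G(\tau,t)=\rho(t)\cdot g(\tau)$, with $\rho$ as in Lemma \ref{l}, is a relative cobordism from $g$ to $s\bar p\,g$, exactly as in the proof of Lemma \ref{projection induces iso}; hence $\Phi\Psi=\mathrm{id}$ and $\Phi$ is an isomorphism.

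The one point that needs care, and which I expect to be the main obstacle, is the properness of this last cobordism: for an arbitrary representative the contraction $\rho(t)\cdot g$ need not be proper into $DE$, since points of $\mathcal T$ escaping to infinity while $g$ approaches $SE$ can be pulled back into a fixed compact set once the scaling factor becomes small. This is dissolved by first pushing $g$ into a fixed interior disc bundle, say $\{|v|\le\tfrac12\}$: the scaling $g\mapsto\tfrac12 g$ is a relative cobordism whose scaling factor stays bounded away from $0$ and is therefore proper, and once the fibre coordinate of $g$ is bounded away from $SE$ the contraction $\rho(t)\cdot g$ is proper by the compact-fibre estimate (preimages of compacta meet only a bounded range of fibre values over a compact base). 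The remaining verifications, that $G$ is Fredholm of index $-(n+k)+1$, that regularity is preserved under composition with the submersion $\bar p$, and that the induced orientations match, are routine and identical to the finite dimensional arguments of \cite{K1}.
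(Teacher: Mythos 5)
Your proof follows the paper's argument in all essentials: the cup product with $\theta(E)$ is identified with $[f]\mapsto[sf]$ via transversality of the pulled-back cocycle with the zero section, and the inverse is obtained by pushing representatives into the open disc bundle and composing with the projection, exactly as in the paper. The only difference is that you write out the verification that the two composites are the identity (which the paper leaves implicit); note in passing that the radial contraction $G(\tau,t)=\rho(t)\cdot g(\tau)$ is already proper because $g$ is proper into $E$ with image in the compact-fibred $DE$ (the preimage of a compact $K\subseteq E$ lies in $g^{-1}\bigl(DE|_{\bar p(K)}\bigr)\times[0,1]$), so the preliminary rescaling into $\{|v|\le 1/2\}$ is not actually needed.
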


\begin{proof} Let $\alpha \in SH^k(M)$ be an element represented by a geometric cocycle $g:\mathcal S \to M$. Since $p$ is a submersion, $p^*$ is given by pull back, which is transversal to the zero section, hence by Lemma \ref{transversal cup product} their cup product is given by their intersection, which is given by the composition $\mathcal S \xrightarrow{g} M \xrightarrow{s} E$.

To see it is an isomorphism, note that every class in $SH^{n+k}(E,SE)$ can be represented by a geometric cocycle whose image lies in the open disc bundle  $E_0$. The reason is that every cocycle is the sum of a geometric cocycle whose image lies $E_0$, and a geometric cocycle whose image lies $E_\infty$, the complement of the disc bundle, and the latter is null bordant, since a null bordism can be taken to be the open cylinder pushing the cocycle to $\infty$. For this reason, one can define a Gysin map $SH^{k+n}(E,SE) \to SH^k(M)$, which is the inverse of the Thom map.  
\end {proof}

This gives a description of the Gysin map, in the case of embeddings $f:M \to N$ of a closed (as a topological space) submanifold of finite codimension with oriented normal bundle, in terms of the Thom isomorphism:

\begin{cor}\label{Gysin}
The Gysin map $i_!:SH^n(M)\to SH^{n+k}(N)$ is equal to the composition  $$SH^n(M)\xrightarrow{\pi^*} SH^n(E)\xrightarrow{- \cup\tau}SH^{n+k}(E,SE) \xrightarrow{\phi} SH^{n+k}(N,SE) \to SH^{n+k}(N)$$
where $E$ is the tubular neighbourhood of $M$ in $N$, $SE$ is the sphere bundle and the map $\phi$ is given by composition, using the fact that classes in $SH^{n+k}(E,SE)$ can be represented by maps whose image lies in $E_0$.
\end{cor}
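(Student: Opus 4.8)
The plan is to trace a single representative cocycle through the displayed four-fold composition, identify what it becomes at each stage using the Thom isomorphism Theorem, and then recognize that the end product is literally the definition of $i_!$ as composition with $i$.

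First I would fix a class $\alpha \in SH^n(M)$ represented by a geometric cocycle $g: \mathcal S \to M$. The first two arrows, $\pi^*$ followed by $-\cup \tau$, together form exactly the map ``cup product with the Thom class'' $SH^n(M) \to SH^{n+k}(E,SE)$ appearing in the Thom isomorphism Theorem (with $\tau = \theta(E)$ and $\pi$ the bundle projection $p$), since for $\alpha$ living on $M$ the cup product with $\theta(E)$ is $\pi^*\alpha \cup \tau$. Hence, by that Theorem, the image of $\alpha$ under these two arrows is represented by the composition with the zero section, namely $[sg: \mathcal S \to E]$.

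Next I would apply $\phi$. Since the representative $sg$ has image contained in the zero section, which certainly lies in the open disc bundle $E_0$, this class is of the special form for which $\phi$ is defined, and $\phi$ simply reinterprets $sg$ as a map into $N$ through the inclusion $E \hookrightarrow N$; this yields the relative class $[sg: \mathcal S \to N] \in SH^{n+k}(N,SE)$. The final forgetful arrow $SH^{n+k}(N,SE) \to SH^{n+k}(N)$ then sends this to the absolute class $[sg: \mathcal S \to N]$. To conclude I would observe that the zero section $s: M \to E$ composed with the inclusion $E \hookrightarrow N$ is precisely the embedding $i: M \to N$, so $sg = ig$ as maps into $N$. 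Therefore the total composition sends $[g: \mathcal S \to M]$ to $[ig: \mathcal S \to N]$, which is by definition $i_!(\alpha)$.

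The only step that needs genuine care is the middle one: verifying that $\phi$ is well defined and, in particular, that the resulting map into $N$ is proper, so that it represents a legitimate relative cocycle. This is where the hypothesis that $M$ is closed in $N$ (so that $i$, and hence $ig$, is proper) and the reduction to representatives supported in $E_0$ are really used; everything else is a direct unwinding of the definition of the Gysin map together with the already-established Thom isomorphism. I expect no further obstacle, so the proof should amount to this bookkeeping.
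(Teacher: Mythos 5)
Your proposal is correct and matches the paper's (implicit) argument: the paper states this as an immediate corollary of the Thom isomorphism Theorem, whose proof already identifies $\pi^*(-)\cup\tau$ with composition by the zero section, so that the remaining arrows just reinterpret $sg$ as the proper map $ig:\mathcal S\to N$. Your added remark about properness of the representative supported in $E_0$ is exactly the point the paper's phrase ``using the fact that classes in $SH^{n+k}(E,SE)$ can be represented by maps whose image lies in $E_0$'' is meant to address.
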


\subsection{A natural isomorphism with singular cohomology}

~

In this subsection we prove a generalization of Theorem \ref{natural iso} to the case of relative cohomology groups. 

\begin{thm} There are natural isomorphisms
$$
\Phi_k: H^k(M,A) \to SH^k(M,A)
$$
commuting with the exterior product, the coboundary in the exact pair sequence and in the absolute case with the coboundary in the Mayer-Vietoris sequence and the Gysin map of an embedding of a closed (as a subspace) submanifold of finite codimension with an oriented normal bundle.
\end{thm}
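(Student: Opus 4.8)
The plan is to construct a natural transformation $\Phi_\bullet$ and then invoke the usual comparison (Eilenberg--Steenrod uniqueness) argument for cohomology theories on Hilbert manifolds. Both $H^\bullet(-;\mathbb Z)$ and $SH^\bullet$ are contravariant homotopy functors carrying a natural exact Mayer--Vietoris sequence (for $SH$ this is the Mayer--Vietoris theorem proved above, for $H$ it is classical), and by Corollary \ref{coefficients} they have the same coefficients: $SH^k(H)$ agrees with $H^k(H;\mathbb Z)$, being $\mathbb Z$ for $k=0$ and $0$ otherwise. Once $\Phi$ is a natural transformation which commutes with the Mayer--Vietoris coboundary and is an isomorphism on these coefficients, a five-lemma induction over the exhaustion used in Proposition \ref{iso} forces it to be an isomorphism on every Hilbert manifold. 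The relative statement I would then reduce to the absolute one: comparing the two long exact sequences of the pair $(M,A)$ (the sequence for $SH$ being the one constructed after Dold \cite{Do}), the absolute isomorphisms for the Hilbert manifolds $M$ and $A$ together with the fact that $\Phi$ commutes with $i^*$ and with $\delta$ yield the relative isomorphism by the five-lemma.

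To construct $\Phi$ I would start in the finite-dimensional setting, where a natural isomorphism $H^k(M',A')\to SH^k(M',A')$ compatible with products and with the coboundary operators is furnished by \cite{K1} in the absolute case and by \cite{Do} for pairs; I take this $\Phi^{\mathrm{fin}}$ as input. To pass to a Hilbert manifold I use that both theories are \emph{stable} in the sense of Lemma \ref{projection induces iso} and Corollary \ref{coefficients}: over a piece $Z\cong N'\times H$ with $N'$ finite dimensional, projection gives isomorphisms $H^k(N')\xrightarrow{\cong}H^k(Z)$ and $SH^k(N')\xrightarrow{\cong}SH^k(Z)$, and I define $\Phi_Z$ as the composite of these with $\Phi^{\mathrm{fin}}_{N'}$. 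Applying this to the exhaustion $Z_1\subseteq Z_2\subseteq\cdots$ and the telescope $T\simeq M$ from the proof of Proposition \ref{iso}, the pieces $U$, $V$, $U\cap V$ are disjoint unions of such $Z$'s; naturality of $\Phi^{\mathrm{fin}}$ for the inclusions $Z_n\hookrightarrow Z_{n+1}$ makes the local definitions compatible, so they assemble to a natural transformation on $T$, hence on $M$, and on $Z_n$ it is an isomorphism because $\Phi^{\mathrm{fin}}$ is.

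It remains to verify the four compatibilities, and here the logical order matters: compatibility with the Gysin map of a closed finite-codimension embedding with oriented normal bundle is needed already to run the five-lemma, since both the Mayer--Vietoris coboundary and the pair coboundary $\delta=i_!\circ\pi^*$ are built from such a Gysin map together with a restriction of projection type. Compatibility with restrictions and with the exterior product reduces, through the stability isomorphisms, to the finite-dimensional statements in \cite{K1}. Compatibility with $i_!$ reduces, via the Thom-isomorphism description in Corollary \ref{Gysin}, to a computation supported in a tubular neighbourhood $E$ of the submanifold; since $E$ is a finite-dimensional vector bundle over a Hilbert manifold, the stability isomorphisms pull this back once more to the finite-dimensional case, where the compatibility is known. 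Combining these, $\Phi$ commutes with both coboundaries, the induction of the first paragraph applies, and the theorem follows.

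The main obstacle I anticipate is exactly this infinite-dimensional bookkeeping. First, the gluing in the second paragraph must be made genuinely natural, not merely natural up to homotopy, so that $\Phi$ is a well-defined transformation of functors and not just a family of maps on homotopy groups. Second, the passage to the limit over the exhaustion could a priori carry a $\lim^1$ obstruction to surjectivity; this is controlled precisely as in Proposition \ref{iso}, by the additivity of both theories (so that the Mayer--Vietoris sequence for the telescope involves the disjoint-union pieces directly) together with the five-lemma, rather than by an invocation of continuity. The genuinely new content beyond \cite{K1} and \cite{Do} is therefore confined to checking that the finite-dimensional compatibilities -- in particular the one with the Gysin map of an embedding -- survive these stabilization and gluing steps, and it is here that Corollary \ref{Gysin} and the stability lemmas do the essential work.
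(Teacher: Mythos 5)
Your overall skeleton --- build a natural transformation, check it commutes with the Mayer--Vietoris coboundary, compare coefficients via Corollary \ref{coefficients}, run the telescope/five-lemma induction of Proposition \ref{iso} using additivity, and then get the relative case from the long exact sequence of the pair --- is exactly the second half of the paper's argument. The gap is in the first half: your construction of $\Phi$ by gluing finite-dimensional comparison maps over the exhaustion $Z_1\subseteq Z_2\subseteq\cdots$ does not actually produce a natural transformation. The diffeomorphisms $Z_n\cong N_n'\times H$ are non-canonical, the inclusions $Z_n\hookrightarrow Z_{n+1}$ are maps of Hilbert manifolds that need not respect any chosen product decompositions, so ``naturality of $\Phi^{\mathrm{fin}}$ for the inclusions'' is not available --- what you would need is naturality of the infinite-dimensional maps you are in the middle of constructing. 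Moreover, even granting compatibility on overlaps, cohomology is not a sheaf: a family of maps defined on the pieces of an open cover that agree on intersections does not assemble to a map on the union (Mayer--Vietoris only gives exactness, with the $\lim^1$-type indeterminacy you mention but do not eliminate), and naturality of the assembled $\Phi$ for an arbitrary continuous map $f:M\to N$ that does not respect the two exhaustions is nowhere addressed. The same defect undermines your verification of compatibility with exterior products and Gysin maps, since those reductions presuppose that $\Phi$ is already a natural transformation.

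The paper avoids all of this by constructing $\Phi$ globally from a universal class: it realizes $K(\mathbb Z,k)$ as a Hilbert manifold (for $k>0$, the interior of $S^k\times H\times[0,1]$ with handles attached to kill higher homotopy), computes $SH^k(K(\mathbb Z,k),*)\cong\mathbb Z$ by Mayer--Vietoris and Corollary \ref{coefficients}, picks the generator $\iota_k$, and sets $\Phi_k([f])=f^*(\iota_k)$. Naturality is then automatic from functoriality of $SH^*$; additivity of $\Phi_k$ follows from the H-space structure of $K(\mathbb Z,k)$; compatibility with the exterior product is checked in the universal case (reducing to products of spheres); and compatibility with the Gysin map of a finite-codimension embedding is obtained from Corollary \ref{Gysin} together with the facts that $\Phi$ is multiplicative and sends the Thom class to the Thom class (by the uniqueness characterization of the latter) --- no finite-dimensional input from \cite{K1} is needed. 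If you want to salvage your route, you would have to prove that your locally defined maps are independent of all choices and natural for arbitrary maps, which is essentially as hard as the theorem itself; the universal-example construction is the step you are missing.
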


\begin{proof}
We interpret $H^k(M,A;\mathbb Z)$ as the group of homotopy classes of maps of pairs into the pointed Eilenberg-MacLane space $K(\mathbb Z,k)$ which has a Hilbert manifold model. For each $K(\mathbb Z,k)$ we construct a class $\iota_k\in SH^k(K(\mathbb Z,k),*)$, and $\Phi_k$ will be given by pulling $\iota_k$ back.

For $k=0$, a model for $K(\mathbb Z,0)$ is $\mathbb Z \times H$. We describe $\iota_0$. Let $\mathcal S=\cup_{r \in \mathbb Z} \mathcal S_r$ be the disjoint union of  $\mathcal S_r= N_{|r|} \times H$ where $N_{|r|}$ is a set with $|r|$ elements. The map $S \to \mathbb Z \times H$ is the one induced  by the projections $\mathcal S_r \to \{r\} \times H$. (This means that there are $|r|$ copies of $H$ which are mapped to $\{r\}\times H$). The orientation is the positive one when $r>0$ and negative one when $r<0$. 

For $k>0$,  consider $K(\mathbb Z,k)$  as a Hilbert manifold by starting with $S^k \times H \times [0,1]$ and attaching handles to kill all higher homotopy groups. The interior of this Hilbert manifold is our model for $K(\mathbb Z,k)$. Using the Mayer-Vietoris sequence  the restriction to $S^k \times H \times (0,1)$ induces an isomorphism $SH^k(K(\mathbb Z,k) )\to SH^k(S^k \times H \times (0,1))$. By the Mayer-Vietoris sequence we obtain an isomorphism $SH^k(S^k \times H \times (0,1)) \cong SH^0(H) \cong \mathbb Z$, where a generator is given by the identity map $H \to H$. We denote the corresponding class in $SH^k(K(\mathbb Z,k),*)$ by $\iota_ k$ (we may assume that the image of this geometric cocycle does not contain $*$).

With this we construct a map $\Phi_k :[(M,A),(K(\mathbb Z,k),*)] \to SH^k(M,A)$ by 
$$\Phi_k([f: M \to K(\mathbb Z,k)])=f^*(\iota _k).$$
By construction, $\Phi_k$ commutes with induced maps. To see it is a homomorphism, note that the map induced by $K(\mathbb Z,k) \times K(\mathbb Z,k) \to K(\mathbb Z,k)$ maps $\iota _k$ to $(p_1^*(\iota_k)+ p_2^*(\iota _k))$ by the standard trick.  To show it commutes with the exterior product one only has to show this in the universal case. There it can be reduced to the case of the product of two spheres where the statement is clear.

Let $f:M\to N$ be an embedding of a closed (as a subspace) submanifold of finite codimension with an oriented normal bundle. In Corollary \ref{Gysin} the Gysin map in stratifold cohomology was shown to be equal to composition of induced maps, Thom isomorphism and push forward $SH^{n+k}(E,SE) \xrightarrow{\phi} SH^{n+k}(N,SE)$. A similar description holds for singular cohomology, where the push forward is given by mapping the complement of the disc bundle to $*$. To show that $\Phi$ commutes with the Gysin map, it is enough to prove  that $\Phi$ commutes with  induced maps, with the Thom isomorphism and with the push forward. We already proved the naturality of $\Phi$. The fact that $\Phi$ commutes with the Thom isomorphism follows from the fact that it is multiplicative, and maps the Thom class to the Thom class, because the Thom class in stratifold cohomology satisfies the characterization of the Thom class. Commuting with the push forward is clear by construction.

Since the coboundary operator $d$ in the Mayer-Vietoris can be obtained by the composition of induced maps and Gysin maps for embeddings it follows that $\Phi$ also commutes with $d$.

Thus we have, in the absolute case, a natural transformation of cohomology theories. A similar argument to the one used in Proposition \ref{iso}, using the telescope construction, shows that it is an isomorphism, since this holds true for $X = H$ (Proposition \ref{coefficients}).  Here we use Milnor's additivity axiom \cite{M1} that the cohomology of a disjoint union of manifolds $M_i$ is the direct product of the cohomology groups of the $M_i$'s.

From the fact that $\Phi$ is an isomorphism in the absolute case, the statement for relative cohomology groups follows by the 5-Lemma if $\Phi$ commutes with the coboundary operator in the exact pair sequence. A similar argument to the one used for $d$ in the Mayer-Vietoris shows that $\Phi$ commutes with the coboundary of the pair sequence when those are defined. In stratifold cohomology we defined $\delta$ to be the composition of the  pull back to the sphere bundle and the Gysin map, which is also true in singular cohomology.
\end{proof}

\subsection{The $\cap$-product and the Gysin map in homology, the Kronecker pairing and the linking form} 

~

The cap product plays an important role relating homology and cohomology. We construct it by considering also a geometric version of singular homology using stratifolds. We also describe the linking form, which together with the Kronecker product gives a way of comparing (co)homology classes. The same technique is used later to describe a Gysin map in homology for proper oriented Fredholm maps between Hilbert manifolds. Such a Gysin map in bordism was constructed by Morava \cite{Mo}. He then speculated the existence of such a map in singular homology.  Later Mukherjea \cite{Mu} has constructed a Gysin map in the setting of Fredholm manifolds (Banach manifolds with some extra structure), though he was not able to show that it is independent of the Fredholm filtration.

We start by describing singular homology groups via stratifolds. More precisely we use $p$-stratifolds \cite{K1}, page 23. In \cite {K1} we have used general stratifolds but for the transversality theorem we are proving below we don't see a simple way to get it for general stratifolds. Both sorts of stratifolds describe singular homology of spaces which have the homotopy type of a $CW$-complex, as we will show below. The $p$-stratifolds are stratifolds, which are inductively constructed starting with a zero-dimensional manifold, which is the $0$-stratum, and attaching $k$-dimensional manifolds $W$ to the $(k-1)$-skeleton via smooth proper maps. The sheaf of smooth functions is also constructed inductively after distinguishing a germ of a collar of $\partial W$ in $W$. It is given by the functions, whose restriction to the $(k-1)$-skeleton is contained in the sheaf and whose restriction to the interior of $W$ is smooth and commute with the retract given by a fixed choice of a collar of $\partial W$ in $W$. In addition we assume, like we did it in our description of cohomology, that $\mathcal S$ is a regular stratifold. Bordism classes of closed $k$-dimensional regular   $p$-stratifolds with oriented top stratum and empty $(k-1)$-stratum together with a continuous map to a topological space $X$ form a homology theory $SH_k(X)$. The fact that it is a homology theory was carried out for general stratifolds in \cite{K1} and the same arguments apply for $p$-stratifolds. The relative fundamental class of the top stratum of a closed $p$-stratifold $\mathcal S$ induces a fundamental class $[\mathcal S]\in H_k(\mathcal S;\mathbb Z)$. Assigning to a class $[f: \mathcal S \to X]$ the image of $[\mathcal S]$ gives a natural isomorphism 
\[
\xymatrix{
SH_k(X) \ar[r]^{\cong} & H_k(X;\mathbb Z).
}
\]

Now we give the construction of the {\bf cap product} in a Hilbert manifold $M$

\[
\xymatrix{
\cap : SH^k(M) \otimes SH_l(M) \ar[r]&  SH_{l-k}(M).
}
\]
Let $f : \mathcal S \to M$ be a representative of a class in $SH^k(M)$ and $g : \mathcal P \to M$ a representative of a class in $SH_l(M)$. We say that $g$ is transversal to $f$, if the restriction of both maps to all strata are pairwise transversal, and also each stratum of $\mathcal S$ is transversal to the composition of $g$ with the attaching map $\partial W^k \to P \to M$.  In this case,  the transversal intersection is a compact, oriented $(l-k)$-dimensional stratifold.

\begin{prop}\label{transversality} Let $f: \mathcal S \to M$ be a geometric cocycle and $g: \mathcal P \to M$ be a map from a finite dimensional regular $p$-stratifold $P$ to $M$, which on a closed subset $A$ of $\mathcal P$ is transversal to $f$. Then $g$ is homotopic to $g'$ rel. $A$, such that $g'$ is transversal to $f$.
\end{prop}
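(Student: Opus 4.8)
The plan is to follow the proof of the finite dimensional transversality theorem for $p$-stratifolds in \cite{K1}, inducting over the skeleta of $\mathcal P$, and to replace Sard's theorem at each analytic step by Smale's theorem \cite{S}. This substitution is exactly what the infinite dimensional setting requires: although $M$ and the strata $\mathcal S^j$ of $\mathcal S$ are infinite dimensional, the hypothesis that $f$ is a geometric cocycle means that each restriction $f|_{\mathcal S^j}\colon \mathcal S^j \to M$ is Fredholm, so the set of regular values of the relevant evaluation maps is residual by Sard--Smale even though ordinary Sard fails in infinite dimensions. Since transversality of $g$ to $f$ is by definition a stratumwise condition, it suffices to arrange, for every stratum $\mathcal S^j$ of $\mathcal S$, that the restriction of $g$ to each stratum of $\mathcal P$ be transversal to $f|_{\mathcal S^j}$, together with the stated condition that each stratum of $\mathcal S$ be transversal to the composition of $g$ with the attaching maps $\partial W^k \to \mathcal P \to M$.

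For the induction, recall that $\mathcal P$ is built by attaching finite dimensional manifolds $W^k$ to the skeleton $\Sigma^{k-1}$ along proper smooth maps, so I would assume inductively that $g$ is already transversal to $f$ on the closed set $A \cup \Sigma^{k-1}$ and extend transversality over the $k$-stratum. Using the collar of $\partial W^k$ fixed in the regular $p$-stratifold structure, one first extends the transversality present on $\Sigma^{k-1}$ across the collar, and then perturbs $g$ on the interior of $W^k$ relative to a neighbourhood of $\partial W^k$ and relative to the preimage of $A$. The perturbation is produced by the Thom--Abraham parametric transversality method: choose a family $F\colon W^k \times B \to M$ with $F_{b_0} = g|_{W^k}$, assembled from local charts of $M$ and bump functions supported away from the rel set, so that the total family $F$ is transversal to $f|_{\mathcal S^j}$; then Sard--Smale yields a residual set of parameters $b$ for which $F_b \pitchfork f|_{\mathcal S^j}$. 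Here the Fredholm property of $f$ is what guarantees that the finite dimensional cokernel of $df$ can be filled by finitely many perturbation directions at each intersection point, so a locally finite dimensional parameter space $B$ suffices.

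The several strata $\mathcal S^j$ are then treated one after another, starting from the lowest, using that transversality is an open condition on the sets where it must be verified. Properness of $f$, which is part of $f$ being a geometric cocycle, makes these verification sets compact once restricted to compacta of $W^k$, so transversality already achieved for the lower strata survives a sufficiently small further perturbation arranging the next stratum. Passing to the limit over the (possibly countably many) strata and over $k$ produces the desired $g'$, and since all perturbations are fixed near $\partial W^k$ and on a neighbourhood of $A$, they assemble into a homotopy rel $A$ that respects the collar retractions; hence $g'$ is again a map of stratifolds and the attaching-map condition is automatically preserved.

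The main obstacle is the simultaneous bookkeeping rather than any single estimate: one must arrange the perturbations on each $W^k$ to be compatible with the fixed collar of $\partial W^k$ (so that the perturbed map still commutes with the collar retraction and remains a stratifold map) and with the attaching-map transversality condition, while at the same time keeping them fixed near $A$ and controlling them through the Sard--Smale step for every stratum $\mathcal S^j$ without destroying transversality achieved at earlier stages. The interplay of the stratified/collar structure, the relative condition, and the replacement of Sard by Sard--Smale is precisely the point where the argument, though parallel to \cite{K1}, needs the Fredholm hypothesis on $f$ and the properness of $f$ to go through.
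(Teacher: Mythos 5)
Your proposal is correct in outline and shares the paper's overall architecture: induction over the skeleta of $\mathcal P$ (the attached manifolds $W^k$), extension of the homotopy across the fixed collars, and the observation that regularity of $\mathcal P$ propagates transversality from the $(k-1)$-skeleton to the attaching maps on $\partial W^k$. Where you genuinely diverge is in how the key transversality step on each $W^k$ is implemented. The paper does not use the parametric Thom--Abraham method at all: it quotes Smale's theorem in the form ``an embedding of a finite dimensional manifold into $M$ can be approximated by a map transversal to a given Fredholm map,'' reduces the general $g|_{W^k}$ to this case by first approximating it (rel boundary, via the collar) by an embedding, and --- crucially --- applies Smale \emph{once}, taking the target Fredholm map to be $f$ restricted to the \emph{disjoint union of all strata of $\mathcal S$}, which is itself a single Hilbert manifold with a Fredholm map to $M$. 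This trick disposes of all strata simultaneously and entirely avoids the stratum-by-stratum iteration that forms the most delicate part of your argument (the openness-plus-properness bookkeeping and the limit over countably many strata). Your iteration can be made to work, but if you want to stay within the parametric framework the cleaner fix is to note that the set of good parameters $b$ is residual for each stratum $\mathcal S^j$ separately, so by Baire category a single $b$ works for all countably many strata at once; either way you should make the convergence of the infinite sequence of perturbations explicit, since ``sufficiently small further perturbation'' needs a summable choice of bounds on an exhaustion of each (possibly non-compact) $W^k$. What your approach buys is independence from the precise form in which Smale's theorem is stated (embeddings versus general maps); what the paper's approach buys is a much shorter argument with no limiting process.
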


\begin{proof}
The key is the transversality theorem by Smale \cite{S} (Theorem 3.1), which says that if $f: N \to M$ is a Fredholm map and $g: V \to M$ is a smooth embedding of a finite dimensional manifold, that $g$ can be approximated by a map transversal to $f$. There is also a relative version, where one assumes $g$ is already transversal to $f$ on a closed subset $A$. We first weaken the assumptions a bit. In the absolute case we don't need to assume that the map $g$ is an embedding since we can approximate all maps from a finite dimensional manifold to a Hilbert manifold by an embedding. In the relative case we restrict ourselves to the case, where $A$ is the boundary of $V$ and $g$ is transversal on the boundary. Now we approximate $g|_{\partial V}$ by an embedding and use a collar to approximate $g$ by a map $g'$, which on $\partial V$ agrees with $g$ and outside an open neighborhood is an embedding. Then we use Smale to approximate $g'$ by a transversal map which on $\partial V$ agrees with $g$. 

We apply this to the case, where $N$ is the disjoint union of all strata of $\mathcal S$, which is itself a Hilbert manifold. Then we proceed inductively over the manifolds $W^n$. Suppose that $g$ restricted to the $n$-skeleton of $\mathcal P$ is already transversal to $f$. Since $\mathcal P$ is regular, this implies that the composition with the attaching map on $\partial W^{n+1}$ is also transversal to $f$. Then we apply our version of Smale's Theorem to approximate $g$ on the $(n+1)$-skeleton by a transversal map. We extend this homotopy inductively to a smooth map on $\mathcal P$ as follows. Suppose that we have extended this homotopy to the $n+r$ skeleton then consider the attaching map of the $n+r+1$ stratum compose it with the already constructed homotopy and use a collar to extend it to the $n+r+1$ skeleton. This is the inductive step for the extension of the homotopy. Using the relative version of Smale's Theorem one can also prove a relative version of this approximation result.
\end{proof}

The {\bf cap product} of a $k$-dimensional cohomology class $[f: \mathcal S \to M]$ and an $l$-dimensional homology class $g: \mathcal P \to M$ is the homology class represented by the transversal intersection which is a $(l-k)$-dimensional oriented compact stratifold. This is well defined by Proposition \ref{transversality}, since if we apply transversality to a  bordism of the (co)homology classes we obtain a $(l-k+1)$-dimensional compact  oriented stratifold whose boundary is the $(l-k)$-dimensional transversal intersection.

The {\bf Kronecker product} $$<..., ...>: SH^k(M) \times SH_k(M) \to \mathbb Z$$ is defined by the cap product followed by the map $SH_0(M)\to SH_0(pt) \cong \mathbb Z$. This agrees with the ordinary Kronecker product in singular (co)homology under our isomorphism. The reason is that this holds by construction in the universal case, where we evaluate $i_k \in SH^k(K(\mathbb Z,k))$ on the generator in homology given by the inclusion of $S^k$ into $K (\mathbb Z,k)$. 

We also have a linking pairing between a torsion class $\alpha \in SH^k(M)$ and a torsion class $\beta$ in $SH_{k-1}(M)$, which is obtained by taking a zero bordism of a multiple  $r\alpha$, taking the Kronecker product with $\beta$, dividing by $r$ and considering the result in $\mathbb Q/_\mathbb Z$. Thus we obtain the {\bf linking pairing}
$$
l: tor \,\, SH^k(M) \otimes tor \,\, SH_{k-1}(M) \to \mathbb Q/_\mathbb Z.
$$
The linking pairing together with the Kronecker product allows (by the Universal Coefficient Theorem) to compare cohomology classes. Let $\alpha$ and $\beta$ be classes in $SH^k(M)$ where $M$ is a Hilbert manifold, then  $\alpha=\beta$ if and only if for all homology classes $x\in SH_k(M)$ the Kronecker product $<\alpha,x> = <\beta,x>$ agrees, which implies that $\alpha - \beta $ is a torsion class, and for all torsion classes $y \in SH_{k-1}(M)$ we have $l(\alpha - \beta,y) =0$. This have the following application: 

\begin{prop} \label{13}Let $f: M \to N$ be a homotopy equivalence. Then for $\alpha$ and $\beta$ in $SH^k(N)$ we have
$$
(f^*)^{-1} (\alpha) = (f^*)^{-1} (\beta), 
$$
if and only if for all all homology classes $x\in SH_k(M)$ the Kronecker product $<\alpha,f_*(x)> = <\beta,f_*(x)>$ agree,  and for all torsion classes $y \in SH_{k-1}(M)$ we have $l(\alpha - \beta,f_*(y)) =0$.
\end{prop}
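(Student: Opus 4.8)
The plan is to transport the comparison criterion for cohomology classes established in the paragraph immediately preceding the statement from $M$ to $N$ by means of the isomorphism $f_{*}$, and then to read off the two displayed conditions.

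First I would record that, since $f$ is a homotopy equivalence and both $SH^{*}$ and $SH_{*}$ are homotopy functors, the induced maps $f^{*}\colon SH^{k}(N)\to SH^{k}(M)$ and $f_{*}\colon SH_{l}(M)\to SH_{l}(N)$ are isomorphisms in every degree. Because $f^{*}$ is in particular a bijection, the equality on the left, $(f^{*})^{-1}(\alpha)=(f^{*})^{-1}(\beta)$, holds if and only if $\alpha=\beta$ in $SH^{k}(N)$. Thus it suffices to detect the equality $\alpha=\beta$ directly on $N$.

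Next I would exploit that $f_{*}$ is a group isomorphism. Hence $\{\,f_{*}(x)\mid x\in SH_{k}(M)\,\}$ is all of $SH_{k}(N)$, and $f_{*}$ carries the torsion subgroup of $SH_{k-1}(M)$ bijectively onto the torsion subgroup of $SH_{k-1}(N)$, since an isomorphism preserves torsion. Consequently the two conditions on the right are equivalent to the unindexed conditions on $N$: that $\langle\alpha,z\rangle=\langle\beta,z\rangle$ for every $z\in SH_{k}(N)$, and that $l(\alpha-\beta,w)=0$ for every torsion class $w\in SH_{k-1}(N)$. I want to stress that here no naturality of the Kronecker or linking pairings across $f$ is required, because in $\langle\alpha,f_{*}(x)\rangle$ and $l(\alpha-\beta,f_{*}(y))$ both arguments already live on $N$; these are simply the pairings on $N$ evaluated at the classes $f_{*}(x)$ and $f_{*}(y)$.

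Finally I would apply the Universal Coefficient comparison criterion for $SH^{*}$ proved just above, which is valid for any Hilbert manifold, to the two classes $\alpha,\beta\in SH^{k}(N)$: it states precisely that $\alpha=\beta$ if and only if the Kronecker products agree on all of $SH_{k}(N)$ and the linking form of $\alpha-\beta$ vanishes on all torsion classes in $SH_{k-1}(N)$. Chaining the three steps closes the circle of equivalences. I expect the only genuinely delicate point to be the bookkeeping of torsion under $f_{*}$, namely that $f_{*}$ restricts to an isomorphism of torsion subgroups and that the linking form is well defined on $N$ so that $l(\alpha-\beta,f_{*}(y))$ has literal meaning, together with the reliance on the as-yet-only-sketched Universal Coefficient criterion; everything else is formal. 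As a remark, one could instead apply the criterion on $M$ to $f^{*}\alpha$ and $f^{*}\beta$ and invoke the adjunction $\langle f^{*}\gamma,x\rangle=\langle\gamma,f_{*}(x)\rangle$ and the analogous identity $l(f^{*}\gamma,y)=l(\gamma,f_{*}(y))$, but that route has the disadvantage of first requiring one to establish the naturality of the Kronecker and linking pairings, which the argument above sidesteps.
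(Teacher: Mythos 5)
Your argument is correct and is exactly the route the paper intends: Proposition \ref{13} is presented as an immediate application of the Universal Coefficient comparison criterion stated in the paragraph just before it, and the paper offers no further proof beyond that. Your reduction of $(f^*)^{-1}(\alpha)=(f^*)^{-1}(\beta)$ to $\alpha=\beta$ via bijectivity of $f^*$, together with the observation that the isomorphism $f_*$ makes the classes $f_*(x)$ exhaust $SH_k(N)$ and carries $\mathrm{tor}\,SH_{k-1}(M)$ onto $\mathrm{tor}\,SH_{k-1}(N)$, is precisely the intended argument.
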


\begin{rem}
This proposition is useful, when one computes induced maps $f^*: SH^k(N) \to SH^k(M)$, and instead considers $i^*(p_2)^*: SH^k(N) \to SH^k(U)$, where $U$ is a tubular neighborhood of $(id,f)(M) \subseteq M \times N$. 
\end{rem}

For a proper oriented Fredholm map $f:M\to N$ of index $k$ we define the {\bf Gysin map} in homology 
$$f^!:H_l(N) \to H_{l+k}(M)$$
using transversal intersection. To be more precise, we represent a class $\alpha\in H_l(N)$ by a map $g:P\to N$ from a closed oriented $p$-stratifold $S$ of dimension $l+k$.  We may assume that $g$ is transversal to $f$, then we look at the pull back, which is map from a closed oriented $p$-stratifold $P'$ of dimension $l+k$ to $M$. As before, this is independent of the choice of $P$ and $g$, thus it is well defined.

\section{equivariant cohomology}
As noted before, one of our motivations for this construction was a geometric description of equivariant cohomology for smooth actions of compact Lie groups on finite dimensional smooth manifolds and on Hilbert manifolds. We now describe it explicitly:
Let $M$ be a Hilbert manifold (or finite dimensional smooth manifold) with a smooth action of a compact Lie group $G$. We construct a Hilbert manifold model for $EG$, a contractible Hilbert manifold with free $G$-action as follows. Since $G$ can be embedded into $O(n)$ for some $n$, it is enough to construct $E(O(n))$. This is more or less the same as the standard construction of $E(O(n))$ as limit of Stiefel manifolds, or even easier. Namely, we can take the Stiefel manifold $V_n(H)$ of orthonormal $n$-frames in the Hilbert space $H$ (see \cite{E}, p. 53). Such a model is unique up to equivariant diffeomorphism. Given this,  we consider geometric cocycles with an orientation preserving, free $G$-action in $M\times EG$, i.e. a geometric cocycle  $f:\mathcal S \to M\times EG$, with a free, smooth action of $G$ on $S$ such that $f$ is equivariant and proper, and $G$ acts orientation preserving on the orientation bundle $L(f)$. 
We denote the bordism classes of equivariant, oriented, free,  $G$-geometric $k$-cocycle $f: S \to M \times EG$, where $G$ acts orientation preserving on $L(f)$, by
$$
SH^{k}_G(M),
$$
the {\bf $k^{th}$ equivariant stratifold
cohomology group} of $M$.  The addition is given by disjoint union. 
\begin{thm}  Let $G$ be a compact Lie group. 
There is  a natural isomorphism of multiplicative
equivariant cohomology theories  
$$
SH_{G}^{*}(M) \to H_{G}^{*}(M;\mathbb{Z})
$$ 
for Hilbert (or finite dimensional) smooth $G$-manifolds $M$.

\end{thm}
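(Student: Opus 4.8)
The plan is to reduce the equivariant statement to the already-established non-equivariant isomorphism of Theorem \ref{natural iso} via the Borel construction, exploiting the fact that $EG$ has been given an explicit Hilbert manifold model (the Stiefel manifold $V_n(H)$). First I would observe that a free, smooth, orientation-preserving $G$-action on a geometric cocycle $f:\mathcal S \to M\times EG$ allows us to pass to the quotient: since the action is free and proper and $EG$ is a free contractible $G$-space, the quotient $\mathcal S/G$ inherits the structure of a singular Hilbert stratifold mapping to $(M\times EG)/G = M\times_G EG$, the Borel construction. The orientation hypothesis on $L(f)$ guarantees that the quotient cocycle is oriented. Thus I expect to construct a natural map $SH^k_G(M)\to SH^k(M\times_G EG)$ sending $[f:\mathcal S\to M\times EG]$ to $[\mathcal S/G \to M\times_G EG]$, and I would check this is a well-defined bijection on bordism classes, with inverse given by pulling back along the (principal $G$-bundle) projection $M\times EG \to M\times_G EG$.

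Once that identification $SH^k_G(M)\cong SH^k(M\times_G EG)$ is in place, the argument essentially finishes itself: the right-hand side is the ordinary stratifold cohomology of a genuine Hilbert manifold (one must verify $M\times_G EG$ is a Hilbert manifold, which follows because $G$ acts freely and smoothly on the Hilbert manifold $M\times EG$ with compact $G$, giving a smooth quotient), so Theorem \ref{natural iso} provides a natural isomorphism $SH^k(M\times_G EG)\to H^k(M\times_G EG;\mathbb Z)$. By definition of equivariant singular cohomology via the Borel construction, $H^k(M\times_G EG;\mathbb Z)=H^k_G(M;\mathbb Z)$. Composing the three identifications yields the desired natural isomorphism $SH^*_G(M)\to H^*_G(M;\mathbb Z)$. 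I would then check that multiplicativity is preserved, which reduces to the multiplicativity already proved in Theorem \ref{natural iso} together with the fact that the quotient construction is compatible with products (the cocycle for $M_1\times M_2$ is the product of the two quotient cocycles over $EG$, using the diagonal $EG\to EG\times EG$ up to equivariant homotopy).

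The main obstacle I anticipate is making the passage to the quotient fully rigorous at the level of stratifold structure and orientations, rather than merely on underlying topological spaces. One must verify that $\mathcal S/G$, equipped with the pushforward sheaf, again satisfies conditions I)--VII) of the stratifold definition, that properness and the Fredholm property descend (the index is unchanged since $G$ is finite-dimensional and acts freely, so $\dim\ker$ and $\dim\mathrm{coker}$ of the differential are preserved under the quotient by a compact group), and crucially that the regularity condition is inherited. The orientation bookkeeping is the delicate part: one needs the orientation-preserving hypothesis on the $G$-action on $L(f)$ to descend to a genuine orientation of the determinant line bundle of the quotient cocycle in the sense of \cite{KT-o}, and one must check naturality of this under $G$-equivariant maps. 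A further technical point worth flagging is independence of the choice of $n$ in the model $E(O(n))=V_n(H)$ and independence of the embedding $G\hookrightarrow O(n)$; this follows from uniqueness of $EG$ up to equivariant diffeomorphism (stated in the construction) together with the homotopy invariance of both theories. The remaining verifications—naturality in $M$, compatibility with the coboundary and Mayer--Vietoris maps—are formal consequences of functoriality of the Borel construction and the corresponding properties already established for the non-equivariant theory.
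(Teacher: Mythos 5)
Your proposal follows exactly the paper's (very terse) argument: the authors also prove this by passing to the quotient $\mathcal S/G \to M\times_G EG$ to identify $SH^*_G(M)$ with the ordinary stratifold cohomology of the Borel construction, and then invoking the non-equivariant isomorphism theorem. Your write-up is in fact more detailed than theirs, correctly flagging the descent of the stratifold structure, orientations, and the Hilbert manifold structure on the quotient as the points requiring verification.
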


\begin{proof}
This is straight forward by passing to the quotients, and then using Theorem \ref{thm1}. 
\end{proof}

In \cite{T2} more general theories along these lines were studied.

\section{Degree}

A notion of degree of proper Fredholm maps of index zero between Banach manifolds was given by Smale \cite{S}. His degree was an invariant in $\mathbb Z /2$. A more refined notion of degree  for oriented maps laying in $\mathbb Z$ was given later by Elworthy-Tromba \cite{ET} and others. In our  language, a proper, oriented Fredholm map of index zero $f:M\to N$ between Hilbert manifolds can be seen as a  cocycle representing a cohomology class in $SH^0(N)$. Assuming $N$ is connected this group is naturally isomorphic to $\mathbb Z$, hence we can associate to such a map a degree, which we denote by $deg(f)$. In this sense, associating to a proper oriented Fredholm map $f:M\to N$ of index $k$  an element in $SH^k(N)$ is a generalization of the notion of degree. This notion agrees with Morava's definition \cite{Mo}. We apply this definition to extend some well known interesting results to the setting of Hilbert manifolds and proper oriented Fredholm maps.

\begin{lem}\label{injectivity}
Let $f:M\to N$ be a proper oriented Fredholm map of index $k$. Then for every $\alpha \in SH^l(N)$
$$f_! \circ f^* (\alpha)= [f] \cup \alpha$$ 
\end{lem}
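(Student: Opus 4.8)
The plan is to prove this identity as the projection formula specialised to the unit class, realised geometrically so that the orientations match on the nose. Represent $\alpha$ by a geometric cocycle $g:\mathcal S\to N$ that is transversal to $f$; since $f$ is a proper oriented Fredholm map and $\alpha$ may be replaced within its cobordism class, such a representative should be available (this is the only technical point, discussed at the end). Form the pull back $\mathcal S'=M\times_N\mathcal S$, with its projections $\tilde g:\mathcal S'\to M$ and $\tilde f:\mathcal S'\to\mathcal S$, so that $f\circ\tilde g=g\circ\tilde f:\mathcal S'\to N$.

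First I would compute the left-hand side. By Corollary \ref{transversal pull back}, transversality of $g$ to $f$ gives $f^*(\alpha)=[\tilde g:\mathcal S'\to M]$. Applying the Gysin map, which by definition is composition with $f$, yields $f_!\circ f^*(\alpha)=[f\circ\tilde g:\mathcal S'\to N]$. Next I would compute the right-hand side. The class $[f]\in SH^{-k}(N)$ is represented by the cocycle $f:M\to N$ itself (a Hilbert manifold is a regular stratifold, and $f$ is proper, oriented and Fredholm of index $k$). Since $f$ and $g$ are transversal cocycles over $N$, Proposition \ref{transversal cup product} identifies $[f]\cup\alpha$ with their transversal intersection, which is precisely $\mathcal S'=M\times_N\mathcal S$ together with the natural map to $N$, namely $f\circ\tilde g=g\circ\tilde f$. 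Thus both sides are represented by the same underlying geometric cocycle $[\,f\circ\tilde g:\mathcal S'\to N\,]$.

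It then remains to match orientations: the orientation on the pull back used to compute $f^*$ agrees with the product orientation on the transversal intersection used in the cup product, by the naturality of the orientation line bundle under pull back noted earlier in the paper. This is what pins the statement down as $[f]\cup\alpha$ rather than merely up to sign, and is the reason I prefer the geometric route over the purely formal one. (Alternatively one can invoke the fact established above that $f_!$ is a map of $SH^*(N)$-modules, i.e. the projection formula, and evaluate it on the unit $1=[\,\mathrm{id}_M\,]\in SH^0(M)$ using $f_!(1)=[f]$; this is quicker but leaves a graded-commutativity sign to be checked against the stated form.)

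The main obstacle is the transversality input. Smale's theorem as cited applies to a Fredholm map against a \emph{finite-dimensional} source, whereas here the strata of $\mathcal S$ are themselves infinite dimensional, so one cannot directly perturb $g$ to be transversal to $f$. I would circumvent this exactly as elsewhere in the paper, by factoring $f$ through the inclusion of the zero section of a tubular neighbourhood of $(\mathrm{id},f)(M)$ in $M\times N$ followed by a submersion. For a submersion the pull back is automatic and already transversal, and for the zero section the required transversality is transversality to a zero section, which is precisely the situation handled in Lemma \ref{l}. Concatenating these two cases, in the spirit of the proof of Proposition \ref{commuting with Gysin}, should reduce the general statement to these two transversal situations and remove the need for a general transversality theorem between two infinite-dimensional Fredholm maps.
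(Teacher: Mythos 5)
Your geometric route is the natural one, but as written it has a genuine gap at exactly the point you flag: the existence of a representative $g:\mathcal S\to N$ transversal to $f$. Nothing in the paper supplies transversality between two infinite-dimensional objects --- Smale's theorem and Proposition \ref{transversality} only perturb maps with \emph{finite-dimensional} source --- and your proposed circumvention does not close the gap, it only relocates it. Lemma \ref{l} has transversality to the zero section as a \emph{hypothesis}, and (as the proof of Corollary \ref{transversal pull back} makes explicit) the pull back of $g$ to the tubular neighbourhood $U$ of $(\mathrm{id},f)(M)$ is transversal to the zero section precisely when $g$ is transversal to $f$; so after factoring $f$ as (zero section)$\,\circ\,$(submersion) you are facing the same unproved transversality statement. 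Actually carrying out the concatenation ``in the spirit of Proposition \ref{commuting with Gysin}'' means re-running the technical-homotopy argument of that proof, i.e.\ reproving the proposition in a special case.

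The paper avoids all of this by applying Proposition \ref{commuting with Gysin} directly to the pull-back square with horizontal maps $f:M\to N$ and $(\mathrm{id},f):N\times M\to N\times N$ and vertical maps $(f,\mathrm{id}):M\to N\times M$ and the diagonal $(\mathrm{id},\mathrm{id}):N\to N\times N$; here the transversality of the two maps into $N\times N$ is automatic, so no perturbation is ever needed. The resulting identity $(\mathrm{id},\mathrm{id})^*\circ(\mathrm{id},f)_!=f_!\circ(f,\mathrm{id})^*$, together with the definitions $f^*=(f,\mathrm{id})^*\circ p^*$ and $\cup=\Delta^*\circ\times$, gives the lemma in two lines. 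This is exactly your parenthetical alternative: the module-structure proposition is proved from this same square, so evaluating the projection formula at $1\in SH^0(M)$ with $f_!(1)=[f]$ \emph{is} the paper's argument (up to the graded-commutativity sign you correctly note must be checked, and which the paper itself glosses over by writing $\alpha\cup[f]$). You should promote that parenthetical to the main proof and drop the transversality route, or else supply a genuine transversality theorem for a geometric cocycle against a proper Fredholm map, which the paper does not contain.
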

\begin{proof}
The following is a pull back diagram where the maps into $N\times N$ are transversal
\[
\xymatrix{
M \ar[r]^f \ar[d]^{(f,id)} & N \ar[d]^{(id,id)}\\
N\times M \ar[r]^{(id,f)} & N\times N.}
\]
This implies, by Proposition \ref{commuting with Gysin}, that 
$$(id,id)^*\circ (id,f)_! = f_! \circ (f,id)^*.$$
Represent $\alpha$ by a map $g:S\to N$. Then, $f_! \circ f^* (\alpha)$ is equal to $(id,id)^*([S\times M \to M\times M])=\alpha \cup [f]$.
\end{proof}

The following consequences are standard:

\begin{cor}
If $(- \cup [f]):SH^*(N) \to SH^*(N)$ is injective then $f^*$ is injective. In particular, if $k=0$ and $deg(f)$ is non zero then $f^*$ is injective modulo $deg(f)$-torsion.
\end{cor}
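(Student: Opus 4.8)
The plan is to derive both statements directly from Lemma \ref{injectivity}, which asserts that $f_! \circ f^* (\alpha) = [f] \cup \alpha$ for all $\alpha \in SH^l(N)$. The key observation is that this identity factors $(- \cup [f])$ through $f^*$: if the cup product with $[f]$ is injective, then since $(- \cup [f]) = f_! \circ f^*$, the composite $f_! \circ f^*$ is injective, and a composite being injective forces its first-applied factor $f^*$ to be injective. So the first statement is essentially immediate from the lemma once one invokes the elementary fact that $g \circ h$ injective implies $h$ injective.

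For the second statement, I would specialize to $k = 0$, where $f$ represents a class $[f] \in SH^0(N)$, and use the identification $SH^0(N) \cong \mathbb{Z}$ (for $N$ connected, as discussed in the Degree section) under which $[f]$ corresponds to $deg(f)$. The content of ``$f^*$ injective modulo $deg(f)$-torsion'' is that the kernel of $f^*$ consists only of classes $\alpha$ with $deg(f) \cdot \alpha = 0$. I would argue: suppose $f^*(\alpha) = 0$; then by Lemma \ref{injectivity}, $[f] \cup \alpha = f_! \circ f^*(\alpha) = 0$. Now I need to understand what $[f] \cup \alpha$ is when $[f]$ lives in $SH^0(N) \cong \mathbb{Z}$. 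The cup product with a degree-zero class $[f]$ corresponding to the integer $deg(f)$ should act as multiplication by $deg(f)$, since the unit $1 \in SH^0(N)$ (represented by the identity map $N \to N$, a proper oriented Fredholm map of index $0$ and degree $1$) acts as the identity, and cup product is $SH^0(N)$-linear. Hence $[f] \cup \alpha = deg(f) \cdot \alpha$, and $f^*(\alpha) = 0$ gives $deg(f) \cdot \alpha = 0$, i.e.\ $\alpha$ is $deg(f)$-torsion.

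The step I expect to be the main obstacle is justifying cleanly that $[f] \cup \alpha = deg(f)\cdot\alpha$, that is, that a class in $SH^0(N) \cong \mathbb{Z}$ acts on $SH^*(N)$ by scalar multiplication via the ring structure. This requires knowing that the isomorphism $SH^0(N)\cong \mathbb{Z}$ is a ring isomorphism sending the unit to $1$, so that the class corresponding to the integer $d$ is $d$ times the unit and cup product with it is multiplication by $d$. For $N$ connected this follows because $SH^0(N)$ is generated by the class of the identity (the degree-$1$ cocycle), and cup product with the unit is the identity by the multiplicative structure established earlier; one should check that the degree, as defined in the Degree section as the image in $SH^0(N)\cong\mathbb{Z}$, is precisely the coefficient of $[f]$ in this generator. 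Once this ring-theoretic identification is in place, both the ``injective'' and the ``injective modulo $deg(f)$-torsion'' conclusions are formal consequences of Lemma \ref{injectivity} and the elementary fact about composites, so the corollary is genuinely ``standard'' as the text claims.
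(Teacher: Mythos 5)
Your argument is correct and is exactly the ``standard'' deduction the paper has in mind: the paper offers no written proof, merely labelling the corollary a standard consequence of Lemma \ref{injectivity}, and your two steps (injectivity of the composite $f_!\circ f^* = (-\cup [f])$ forces injectivity of $f^*$; for $k=0$ the class $[f]=deg(f)\cdot 1$ in $SH^0(N)\cong\mathbb Z$ acts by multiplication by $deg(f)$, so $\ker f^*$ is $deg(f)$-torsion) fill in precisely what was left implicit. The point you flag as the main obstacle is indeed the only thing to check, and it holds because the unit of $SH^0(N)$ is the generator under the isomorphism with $\mathbb Z$ and the cup product is bilinear.
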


\begin{example} 
Let $f:M\to N$ be a proper oriented Fredholm map. If the cohomology ring of $M$ is an integral domain, for example a polynomial ring, and the cohomology of $N$ does not contain a subring isomorphic to the cohomology of $M$ then $f$ represents the zero class.
\end{example}

\begin{prop} (compare \cite{Mu})
Let $f:M\to N$ be proper oriented Fredholm map of index $0$ between connected manifolds. If $deg(f)=\pm 1$ then if $M$ is contractible then also $N$ is contractible and so diffeomorphic to the Hilbert space $H$.
\end{prop}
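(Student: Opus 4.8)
The plan is to first extract homological information from the degree hypothesis via Lemma \ref{injectivity}, then to pin down $\pi_1(N)$ by a covering space argument, and finally to invoke Hurewicz--Whitehead together with the structure theory of Hilbert manifolds.

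First I would note that, $N$ being connected, $SH^0(N)\cong\mathbb Z$ is generated by the unit $1=[\mathrm{id}\colon N\to N]$, and by the definition of degree the class $[f]\in SH^0(N)$ equals $\deg(f)\cdot 1=\pm 1$. Hence cup product with $[f]$ is $\pm\mathrm{id}$ on $SH^*(N)$, and Lemma \ref{injectivity} gives $f_!\circ f^*=[f]\cup(-)=\pm\mathrm{id}$. In particular $f^*\colon SH^l(N)\to SH^l(M)$ is injective for every $l$. Since $M$ is contractible, Theorem \ref{natural iso} yields $SH^l(M)\cong H^l(M;\mathbb Z)=0$ for $l>0$, so $SH^l(N)\cong H^l(N;\mathbb Z)=0$ for all $l>0$; that is, $N$ has the reduced integral cohomology of a point.

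Next I would show $f_*\colon\pi_1(M)\to\pi_1(N)$ is surjective, which is the step that genuinely uses $\deg(f)=\pm1$ and which I expect to be the main obstacle, since the cohomological vanishing above only forces $\pi_1(N)$ to be perfect. Write $H=f_*(\pi_1 M)\le\pi_1(N)$ and let $p\colon N_H\to N$ be the associated connected covering, which is again a Hilbert manifold. By the lifting criterion $f$ lifts to a map $\tilde f\colon M\to N_H$, and $\tilde f$ is again proper, because for compact $K\subseteq N_H$ the set $\tilde f^{-1}(K)$ is a closed subset of the compact set $f^{-1}(p(K))$. By Smale's theorem \cite{S} choose a regular value $y\in N$ of $f$; since $f$ is a proper Fredholm map of index $0$ the fibre $f^{-1}(y)$ is a finite set with signed cardinality $\deg(f)=\pm1$, and because $p$ is a local diffeomorphism the points of $p^{-1}(y)$ are regular values of $\tilde f$. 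As $N_H$ is connected, the signed cardinality of $\tilde f^{-1}(\tilde y)$ equals $\deg(\tilde f)$ for every $\tilde y\in p^{-1}(y)$, and the disjoint decomposition $f^{-1}(y)=\bigsqcup_{\tilde y\in p^{-1}(y)}\tilde f^{-1}(\tilde y)$ then yields $\deg(f)=[\,\pi_1(N):H\,]\cdot\deg(\tilde f)$ when the index is finite, while if the index were infinite the finite set $f^{-1}(y)$ would force some $\tilde y\in p^{-1}(y)$ to have empty $\tilde f$-preimage, giving $\deg(\tilde f)=0$ and hence $\deg(f)=0$. Since $\deg(f)=\pm1$, the index must be finite and divide $1$, so $H=\pi_1(N)$; as $\pi_1(M)=0$ this gives $\pi_1(N)=0$.

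Finally, with $N$ simply connected and $\tilde H^*(N;\mathbb Z)=0$, the Universal Coefficient Theorem forces $\tilde H_*(N;\mathbb Z)=0$, since a lowest nonvanishing homology group would be detected by a $\mathrm{Hom}$ or $\mathrm{Ext}$ term into $\mathbb Z$. The Hurewicz theorem then shows that all homotopy groups of $N$ vanish, and since Hilbert manifolds have the homotopy type of $CW$ complexes, Whitehead's theorem shows $N$ is contractible. A contractible Hilbert manifold is homotopy equivalent to $H$, so by the Burghelea--Henderson theorem (property (3), \cite{BH}) this equivalence is homotopic to a diffeomorphism, giving $N\cong H$.
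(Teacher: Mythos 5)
Your proof is correct, and its overall skeleton matches the paper's: acyclicity of $N$ from Lemma \ref{injectivity}, then simple connectivity from $\deg(f)=\pm1$, then Hurewicz--Whitehead plus the Burghelea--Henderson theorem. But the middle step --- the only one where $\deg(f)=\pm1$ is really used --- is done by a genuinely different argument. The paper takes a loop $g\colon S^1\to N$, makes it an embedding transversal to $f$ via the transversality results already established for maps of finite-dimensional objects into Hilbert manifolds, and observes that $f^{-1}(g(S^1))$ is a closed oriented $1$-manifold in $M$ each of whose components covers $g(S^1)$ with some degree $d_i$; since $M$ is simply connected each component is null-homotopic, so $d_i[g]=0$ in $\pi_1(N)$, and $\sum d_i=\pm1$ forces $[g]=0$. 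You instead run the classical covering-space proof that a degree-$\pm1$ map is $\pi_1$-surjective: lift $f$ to the cover $N_H$ associated to $f_*\pi_1(M)$, check the lift is proper, and count a regular fibre to get $\deg(f)=[\pi_1(N):H]\cdot\deg(\tilde f)$. Both are sound. Your version is more conceptual and actually proves the stronger statement $f_*\pi_1(M)=\pi_1(N)$ without assuming $M$ contractible, but it carries some bookkeeping the paper's route avoids: you should say explicitly that $\tilde f$ inherits an orientation of its determinant line bundle through the local diffeomorphism $p$, that $N_H$ stays in the paper's category (second countability is automatic since $\pi_1$ of a second-countable manifold is countable), and that the identification of $\deg$ with the signed cardinality of a regular fibre is exactly the Kronecker pairing $\langle[f],[\mathrm{pt}]\rangle$ of the paper, which is what makes the count independent of the regular value chosen. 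The paper's route keeps everything inside the transversality machinery it has already built and needs only the elementary fact that a proper submersion of $1$-manifolds onto a circle is a covering.
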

\begin{proof}
By Lemma \ref{injectivity} we know that $N$ is acyclic. Since $N$ has the homotopy type of a $CW$ complex it is enough to show it is simply connected.   Then we consider a map $g:S^1\to N$ representing an element of $\pi_1(N)$. By the transversality Theorem for maps from a finite dimensional manifold we can assume that $g$ is transversal to $f$. We can further approximate it by an embedding which is transversal to $f$. Then $S:= f^{-1}(g(S^1))$ is a closed oriented submanifold of dimension $1$, since $f$ has index $0$ and $f|_S$ is a submersion. Thus $f|_S$ is a covering on each connected component $S_i$ of $S$. The sum of the degrees $d_i$ of these coverings $S_i\to g(S^1) $ is  the degree of $f$. If we restrict $f$ to one of the components $S_i$ and use the fact that $M$ is simply connected we conclude that $d_i[f] =0$. Since $\sum d_i =1$ we conclude that $[f] =0$.
\end{proof}

\section{Characteristic classes and Steenrod representation}\label{char}

Characteristic classes of n-dimensional smooth vector bundles $p: E \to M$ over a Hilbert manifold $M$ can be geometrically defined as in \cite{K1}. The Thom class of an oriented vector bundle, which is a class in $SH^n(DE, SE)$, where $DE$ is the disc bundle and $SE$ the sphere bundle, is simply given by the $0$ section. To see this is indeed the Thom class, just notice that it intersects each fiber in exactly one point with the right orientation. This extends to an absolute  class in the total space, whose image under the map induced by the $0$-section is the {\bf Euler class}. For non-oriented bundles the same construction gives the $n$-th {\bf Stiefel-Whitney class}.

From these classes one constructs the Chern classes of a complex vector bundle by considering the exterior tensor product of $E$ with the tautological line bundle over $\mathbb {C}P^\infty$ considered as Hilbert manifold. One considers the Euler class of this bundle to obtain the {\bf total Chern class}
$$
c(E) \in SH^{2n} (M \times \mathbb {C}P^\infty).
$$
If one applies the K\"unneth Theorem to this class one obtains as coefficients the Chern classes $c_k(E) \in SH^{2k}(M)$. By naturality the classes $c_k(E)$ can be directly constructed as follows.
Instead of taking the Euler class of the exterior tensor product with the tautological line bundle over $\mathbb {C}P^\infty$ we take the 
Euler class of the exterior tensor product with the tautological line bundle over $\mathbb {C}P^{n-k}$. This is a class in $SH^{2n} (M \times \mathbb {C}P^{n-k})$ and we obtain the Chern class $c_{k}(E)$ by composing it with the projection to $M$ (with other words we apply the Gysin map of the projection to $M$). From the Chern classes one defines the Pontryagin classes. The Stiefel-Whitney classes are constructed in the same way from the top Stiefel-Whitney class by using the real projective spaces instead of the complex projective spaces. 

As in the finite dimensional setting the proof that these classes agree with the ordinary characteristic classes follows by showing that the characteristic classes fulfill the axioms of characteristic classes. The proof is the same as in \cite {K1}. 

If one wants to use this definition for explicit computations one has a problem already with the Euler class. The Thom class in $SH^k(DE,SE)$ is explicit, since it is the class represented by the $0$-section, but the Euler class is the pull back, which in the infinite dimensional setting is not given by the transversal self intersection unless one cam deform the inclusion of the $0$-section to make it transversal. Thus it is better to consider the Euler class sitting in $SH^k(E)$ represented by the $0$-section. Similarly one should consider the total Chern class as sitting in $SH^k(E \otimes L)$, where $E \otimes L$ is the exterior tensor product of $E$ and the tautological bundle over $\mathbb {C}P ^N$ for a large $N$. 

This raises the question whether keeping the classes in the cohomology of the total spaces is a disadvantage. Not really.  We recommend for computations to consider the characteristic classes sitting in the cohomology of $E$, in the case of the Euler class, and in the cohomology of $E  \otimes L$ in the case of the total Chern class. Using the Kronecker product and the linking pairing one can then compare this class with cohomology classes in $M$ resp.\ $M \times \mathbb {C}P^N$ using Proposition \ref{13}.

For finite dimensional oriented manifolds one studies the Steenrod representation problem for (co)homology classes. This means for a cohomology class $x \in H^k(M)$ to ask whether there is a  map $f: N \to M$ from an oriented manifold such that $f_! (1) = \alpha$, where $1$ means the unit class in $H^0(N)$. Using our definition of singular cohomology one can generalize this in two ways: Instead of looking for a closed oriented manifold one can look for a proper oriented map $f: N \to M$, such that the cohomology class represented by $f$ in $SH^k(M)$ corresponds to $\alpha$. And verbally the same makes sense in infinite dimensions, where $M$ is a Hilbert manifold and we ask for a proper oriented Fredholm map representing $\alpha$. This is the {\bf Steenrod representation problem for Hilbert manifolds}. 

Since the Euler class is represented by a manifold (even by a submanifold) we see that the Chern classes (as well as the Stiefel-Whitney classes) are Steenrod representable in the sense that there is a proper oriented Fredholm map from a smooth Hilbert manifold to $M$ representing the class in $SH^{2k}(M)$. Since the Pontryagin classes are derived from the Chern classes the same holds for them. 

Thus we  obtain the following result:

 \begin{thm} Let $E$ be a $k$-dimensional vector bundle over a Hilbert manifold $M$. Then all characteristic classes (Stiefel-Whitney classes, Euler class, if $E$ is oriented, Chern classes, if $E$ is a complex vector bundle, and Pontryagin classes) are in the sense above Steenrod representable.
 \end{thm}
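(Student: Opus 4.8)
The plan is to reduce every class to the Euler class, which the discussion preceding the theorem has already exhibited as represented by a closed oriented submanifold, and then to propagate Steenrod representability through the inductive definitions of the remaining classes by composing the representing maps with suitable Gysin maps. Throughout I will use three facts: that for a proper oriented Fredholm map $f$ the assertion ``$f$ represents $\alpha$'' means $f_!(1)=\alpha$; that the Gysin map is functorial, $g_!\circ f_!=(g\circ f)_!$; and that a projection with compact oriented fibre is itself a proper oriented Fredholm map (its index is the dimension of the fibre, and the fibre being oriented makes the map oriented).

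First I would record the base case. If $E$ is oriented of rank $k$, a section $\sigma$ of $E$ transversal to the zero section has zero set $Z=\sigma^{-1}(0)$, a closed codimension-$k$ submanifold of $M$ whose normal bundle is canonically $E|_Z$ and hence oriented. Thus the inclusion $i:Z\hookrightarrow M$ is proper (as $Z$ is closed), Fredholm of index $-k$, and oriented, and since the Thom class is the class of the zero section one has $i_!(1)=e(E)$; this is exactly the Steenrod representability of the Euler class. Forgetting the orientation and passing to $\mathbb{Z}/2$-coefficients, the same zero set represents the top Stiefel--Whitney class $w_k(E)\in SH^k(M;\mathbb{Z}/2)$.

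Next I would treat the Chern classes, which is where the Gysin machinery enters. Recall $c_j(E)$ is defined as the image under the projection $\pi: M\times\mathbb{C}P^{\,k-j}\to M$ of the Euler class of the exterior tensor product $E\otimes L$ of $E$ with the tautological line bundle. Since $E\otimes L$ is complex, hence canonically oriented, of real rank $2k$, the base case furnishes a proper oriented Fredholm map $\ell:Z'\hookrightarrow M\times\mathbb{C}P^{\,k-j}$ of index $-2k$ with $\ell_!(1)=e(E\otimes L)$. As $\mathbb{C}P^{\,k-j}$ is compact and oriented, $\pi$ is proper, oriented and Fredholm of index $2(k-j)$, so $\pi\circ\ell:Z'\to M$ is proper oriented Fredholm of index $-2j$, and by functoriality $(\pi\circ\ell)_!(1)=\pi_!\,\ell_!(1)=\pi_!\,e(E\otimes L)=c_j(E)$, giving Steenrod representability. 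The Stiefel--Whitney classes $w_j(E)$ are handled identically with $\mathbb{R}P^{\,k-j}$ and $\mathbb{Z}/2$-coefficients, where no orientations are required, and the Pontryagin classes follow because $p_j(E)=(-1)^j c_{2j}(E\otimes_{\mathbb{R}}\mathbb{C})$ is, up to an orientation reversal that keeps the representing map proper oriented Fredholm, one of the Chern classes already produced.

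The hard part is the base case, namely producing, over an infinite-dimensional $M$, a section of a finite-dimensional bundle transversal to the zero section and verifying that the resulting inclusion of the zero set is proper and oriented; this is the one genuinely non-formal input, since in the infinite-dimensional setting one cannot always perturb the zero section to be transversal to itself. Everything after it is bookkeeping with indices and functoriality of Gysin maps. Because this base case is precisely the content established just before the theorem (``the Euler class is represented by a submanifold''), I would invoke it directly rather than reprove it, and the remaining steps are then immediate.
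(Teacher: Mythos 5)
Your reduction of the Chern, Stiefel--Whitney and Pontryagin classes to the Euler class, by composing a representative of $e(E\otimes L)$ with the proper oriented Fredholm projections $M\times\mathbb{C}P^{\,k-j}\to M$ (resp.\ $M\times\mathbb{R}P^{\,k-j}\to M$) and invoking functoriality of the Gysin map, is exactly the paper's argument and is fine. The gap is in your base case. You propose to represent $e(E)\in SH^k(M)$ by the zero set $Z=\sigma^{-1}(0)$ of a section $\sigma$ transversal to the zero section, but over an infinite-dimensional Hilbert manifold such a section need not exist: a section of a finite-rank bundle is locally a map $H\to\mathbb{R}^k$, which is never Fredholm (its differential always has infinite-dimensional kernel), so Smale's transversality theorem does not apply, and Sard's theorem fails outright in infinite dimensions. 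You flag this difficulty yourself, but then assert that it is ``precisely the content established just before the theorem.'' It is not: the paper explicitly warns that the Euler class ``is not given by the transversal self intersection'' and instead represents it by the zero section $s\colon M\to E$ viewed as a class in $SH^k(E)$, i.e.\ in the cohomology of the \emph{total space}, not of $M$. So the statement you want to import is different from the one the paper provides.

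What is missing is the transport of this representative from $SH^k(E)$ to $SH^k(M)$ without losing the property of being a manifold. Here one needs an extra input: $p\colon E\to M$ is a homotopy equivalence of infinite-dimensional Hilbert manifolds, hence homotopic to a diffeomorphism $\phi$ by the Burghelea--Henderson theorem quoted in Section 2, and then
$e(E)=(p^*)^{-1}[s]=(\phi^{-1})^*[s]=[\phi\circ s\colon M\to M]$,
a proper oriented Fredholm map of index $-k$ from a Hilbert manifold (indeed an embedding onto a closed codimension-$k$ submanifold of $M$ --- just not the zero locus of any section of $E$). With this substitute base case the propagation steps of your argument go through verbatim; without it, the first step of your induction is not established.
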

 
 \begin {rem} Probably one can prove a better result. Consider a Hilbert space model for $BO$ or $BU$ (see for example \cite{PS}). Then one can ask for the existence of Hilbert submanifolds representing the Stiefel-Whitney classes, the Pontryagin classes or the Chern classes. In the context of Banach manifold models this was studied by Koschorke \cite{Ko}. 
 \end{rem}

\end{document}